\definecolor{amaranth}{rgb}{0.9, 0.17, 0.31}
\definecolor{bluegray}{rgb}{0.4, 0.6, 0.8}
\newtheorem*{maintheorem*}{Main Theorem}
\newtheorem{theorem}{Theorem}[section]
\newtheorem{proposition}[theorem]{Proposition}
\newtheorem{lemma}[theorem]{Lemma}
\newtheorem*{theorem*}{Theorem}
\newtheorem{remark}[theorem]{Remark}
\newtheorem*{example*}{Example}
\newtheorem*{conjecture*}{Conjecture}
\def\1{\mathbf 1}
\def\j{\mathbf j}
\def\0{\mathbf 0}
\def\cB{\mathcal B}
\def\cC{\mathcal C}
\def\cL{\mathcal L}
\def\cO{\mathcal O}
\def\cP{\mathcal P}
\def\cX{\mathcal X}
\def\cT{\mathcal T}
\def\cS{\mathcal S}
\def\cQ{\mathcal Q}
\def\bR{{\mathbb R}}
\def\<{\langle}
\def\>{\rangle}
\newcommand\comment[1]{}
\newcommand*{\shifttext}[2]{
  \settowidth{\@tempdima}{#2}
  \makebox[\@tempdima]{\hspace*{#1}#2}
}
\newcommand\redsout{\bgroup\markoverwith{\textcolor{amaranth}{\rule[0.5ex]{2pt}{0.4pt}}}\ULon}
\title{Reconstructing a generalized quadrangle from the Penttila-Williford $4-$class association scheme}
\author{Giusy Monzillo\footnote{Giusy Monzillo: giusy.monzillo@unibas.it\hfill\newline\hspace*{1.4em}
Alessandro Siciliano: alessandro.siciliano@unibas.it\hfill\newline Dipartimento di Matematica, Informatica ed Economia -
Universit\`{a} degli Studi della Basilicata - Viale dell'Ateneo Lucano 10 - 85100 Potenza (Italy).} \footnote{The research was supported by the Italian National Group for Algebraic and Geometric Structures and their Applications (GNSAGA-INdAM). }
\and
Alessandro Siciliano${}^{* \dag}$
}
\date{}
\begin{document}


\maketitle

\thispagestyle{fancy}
\fancyhf{}
\renewcommand{\headrulewidth}{0pt}
\lhead{}

\begin{abstract}
Penttila and Williford constructed a $4-$class association scheme from a generalized quadrangle with a doubly subtended subquadrangle. We show that an association scheme with appropriate parameters and satisfying  some assumption about maximal cliques must be the Penttila-Williford scheme.
\end{abstract}

\section{Introduction}
\comment{
Let $\mathfrak X=(X,\{R_i\}_{0\le i\le d})$ be a (symmetric) association scheme with $d$ classes. For $0\le i\le d$, let $A_i$ be the adjacency matrix of the relation $R_i$, and $E_i$ the $i$-th primitive idempotent of the Bose-Mesner algebra of $\mathfrak X$ which projects on the $i$-th maximal common eigenspace of $A_0,\ldots,A_d$. The  matrices $P$ and $Q$ defined by
\[
(A_0\ A_1\ \ldots \ A_d)=(E_0\ E_1\ \ldots \ E_d)P
\]
and 
\[
(E_0\ E_1\ \ldots \ E_d)=|X|^{-1}(A_0\ A_1\ \ldots \ A_d)Q
\]
are the {\em first} and the {\em second eigenmatrix} of $\mathfrak X$, respectively. 

An association scheme is said to be  $P${\em -polynomial}, or {\em metric}, if, after a reordering of the relations, there are polynomials $p_i$ of degree $i$ such that $A_i = p_i(A_1)$; an association scheme is called $Q${\em -polynomial}, or {\em cometric}, if, after a reordering of the eigenspaces, there are polynomials $q_i$ of degree $i$ such that $E_i =q_i(E_1)$, where multiplication is done entrywise. The reader is referred to \cite{bi,bcn} for further information on association schemes.
} 

A (finite) {\em generalized quadrangle} (GQ) is an incidence structure $\cS=(\cP,\cB, {\rm I})$ where  $\cP$ and $\cB$ are disjoint (nonempty) sets of objects called {\em points} and {\em lines}, respectively, and for which $\rm I\subseteq (\cP,\cB)\times (\cB,\cP)$ is a symmetric point-line incidence relation satisfying the following axioms:
\begin{itemize} 
\item[ (i)] Each point is incident with $t+1$ lines, and two distinct points are incident with at most one line.
\item[(ii)]  Each line is incident with $s+1$ points, and two distinct lines are incident with at most one point.
\item[(iii)] If $x$ is a point and $L$ is a line not incident with $x$, then there is a unique pair $(y, M)\in \cP\times \cB$ such that $x\, {\rm I}\, M\, {\rm I}\, y\, {\rm I}\,  L$.
\end{itemize}

The integers $s$ and $t$ are the {\em parameters} of the GQ, and $\cS$ is said to have {\em order} $(s,t)$.  If $\cS$ has order $(s,t)$, then it follows that $|\cP| = (s + 1)(st + 1)$ and $|\cB| = (t + 1)(st + 1)$. \\ For more details on generalized quadrangles, the reader is referred to \cite{pt}.

It is known that the points of a generalized quadrangle under the relation of collinearity form a strongly regular graph, that is a $2-$class association scheme. 
By using the geometry of generalized quadrangles which satisfy prescribed  properties, it is possible to  construct association schemes with more than two classes. \\
Payne in \cite{payne} constructed a $3-$class association scheme starting from a generalized quadrangle with a quasi-regular point. Subsequently, Hobart and Payne in \cite{HobPay} proved that an association scheme having the same parameters and satisfying an assumption about maximal cliques must be the above $3-$class scheme.
In \cite{gl}, Ghinelli and L\"owe defined a $4-$class association scheme on the points of a generalized quadrangle with a regular point, and they characterized the scheme by its parameters.
The techniques used in the above papers are mostly eigenvalue techniques, and \cite{haem} is a general reference for these.

Penttila and Williford  \cite{pw} constructed an infinite   family of  $4-$class association schemes starting from  a generalized quadrangle of order $(r,r^2)$ with a doubly subtended subquadrangle of order $(r,r)$. These schemes are $Q-$bipartite, not $Q-$antipodal, neither $P-$polynomial nor  the dual of a $P-$polynomial scheme. 
In the spirit of \cite{gl,HobPay},  we characterize these schemes by their parameters under certain assumptions.  \\ The principal references on association schemes are  \cite{bi,bcn}. 
 
 \bigskip 
The paper is structured as follows. Section \ref{sec_1} contains background information on generalized 	quadrangles with a doubly subtended subquadrangle as well as  the parameters of the Penttila-Williford $4-$class scheme $(\cX, \{R_i\}_{i=0,...,4})$ arising from this geometry. The data of the scheme suggest that the relation $R_3$ can be viewed as the collinearity between points in the GQ not in the subGQ. In Section \ref{sec_2} properties of maximal $\{0,3\}$-cliques of the scheme are explored. In particular, by  considering {\em triple intersection numbers}, we prove that for any pair in  $R_3$  there exists a unique maximal $\{0,3\}$-clique (of size $r$)  containing it (Lemma \ref{lem_3}). For any such  a clique $C$, the set $T_C$ of all vertices which are $2-$related to $C$ is taken into account. In Section \ref{sec_3}, by assuming two particular  hypotheses, one on the set of all cliques through any vertex and one on the sets of type $T_C$,  we can prove that  any set $T_C$ is disjoint union of maximal $\{0,3\}$-cliques (Proposition \ref{prop_5}), and  finally we are able to reconstruct the GQ  of order $(r,r^2)$ with a doubly subtended sub{GQ} of order $(r,r)$ (Theorem \ref{main}).

We would like to give a few remarks on the reasons why we need to introduce the two hypotheses. In order to prove Proposition \ref{prop_5}, our first attempt was to use  eigenvalue techniques in the footsteps of \cite{HobPay,bh}. Unfortunately, this method did not produce the wanted result. Hence, we approached the problem according to the idea from \cite{is}. Although also this method did not fully work, it provided   the inspiration for the formulation of the first hypothesis. 
The second hypothesis underlies the definition of the lines of the subtended subGQ which  is completely missing from the scheme. The main difference between our problem and those faced in \cite{HobPay,bh,is} is that, there, {\em a few} lines must be reconstructed from the scheme. More technical details are given in the Appendix \ref{appendix}.

\section{Preliminaries}\label{sec_1}
Let $\cS$ be a GQ of order $(r,r^2)$ with a subGQ $\cS'$ of order $(r,r)$, and $x$ a point of $\cS$ not in $\cS'$.  Then, the set of points of $\cS'$ which are collinear with $x$ form an ovoid of $\cS'$. Following Brown \cite{brown},  this ovoid is said to be {\em subtended by the point} $x$, and it will be denoted by $\cO_x$. An ovoid of $\cS'$ is said to be {\em doubly subtended} provided that it is subtended by exactly two points of $\cS\setminus \cS'$. 
The subGQ $\cS'$ is {\em doubly subtended} in $\cS$ provided that every subtended ovoid of $\cS'$ is doubly subtended. In this case,  $x'$ will denote the other point subtending $\cO_x$, and we refer to $x$ and $x'$ as {\em antipodes}. Lemma 2.3 in \cite{brown} states that $\cS'$ is doubly subtended in $\cS$ if and only if $\cS$ has an involutorial automorphism which fixes $\cS'$ pointwise. This automorphism simply interchanges antipodes while leaving the points of $\cS'$ fixed. 

 It is known  \cite[Corollary 2.2]{brown}  that the size of the intersection of two subtended ovoids $\cO_x$ and $\cO_y$ is either  1, $r+1$ or $r^2+1$ depending only on whether $x$ and $y$ subtend different ovoids and $y$ is collinear with either $x$ or $x'$, or $x$ and $y$ subtend different ovoids and $y$ is collinear with neither $x$ nor $x'$, or $x$ and $y$ subtend the same ovoid.

For the convenience of the reader, we recall here the relations of the Penttila-Williford $4-$class scheme $\cX=(X,\{R_i\}_{i=0}^{4})$. 
Let $\cS=(\cP, \cL,\mathrm I)$  be a GQ of order $(r, r^2)$, $r > 2$, with a doubly subtended subGQ $\cS'=(\cP', \cL',\mathrm I')$ of order $(r,r)$. On the set $X$ of points of $\cS\setminus \cS'$, consider the following relations together with the identity relation $R_0$:
\begin{itemize}
\item[$R_1$:]  $(x,y)\in R_1$ if and only if $x$ and $y$ are not collinear in $\cS$ and $|\cO_x \cap \cO_y|=1$.
\item[$R_2$:] $(x,y)\in R_2$ if and only if $x$ and $y$ are not collinear in $\cS$ and $|\cO_x \cap \cO_y|=r+1$. 
\item[$R_3$:]  $(x,y)\in R_3$ if and only if $x$ and $y$ are collinear in $\cS$.
\item[$R_4$:] $(x,y)\in R_4$ if and only if $\cO_x =\cO_y$.
\end{itemize}

Note that $(x,y)\in R_1$ implies that $y$ is collinear with $x'$, and $(x,y)\in R_2$ implies that $y$ is not collinear with $x'$.

Our idea is to reconstruct the quadrangle $\cS$  with the doubly subtended subGQ  $\cS'$ from the parameters of the scheme, which we report below. 

As usual, $n_i$ denotes the valency of $R_i$,  $p^k_{ij}$ are the intersection numbers of the scheme. Precisely,
\begin{equation*}
 n_1=(r-1)(r^2+1), \ \ \ \ \ n_2=(r^2-2r)(r^2+1),  \ \ \ \ \ n_3=(r-1)(r^2+1), \ \ \ \ \ n_4=1,
\end{equation*}
and the intersection numbers $p^k_{ij}$ are collected in the following tables whose entries are indexed by $(i,j)$:

\begin{table}[htbp]
\centering 
\resizebox{.7\textwidth}{!}{
\begin{tabular}{l|ccccc}
$p^1_{i,j}$ & \hspace{.1in}  1 &\hspace{.1in} 2 &\hspace{.1in} 3 &\hspace{.1in} 4\\[.1in]
\hline\\[.03in] 
1 & \hspace{.1in} $r^2 $ &\hspace{.1in} $r^2(r-2)$ &\hspace{.1in}  $r-2 $ &\hspace{.1in}  $0$ \\[.1in]
 2 & \hspace{.1in}  $ r^2(r-2)$ &\hspace{.1in}  $ r^4-4r^3+5r^2-2r$    &\hspace{.1in}   $r^2(r-2) $ &\hspace{.1in}  $0$ \\[.1in]
 3 & \hspace{.1in} $r-2 $ &\hspace{.1in}  $r^2(r-2) $    & \hspace{.1in}  $ r^2$ &\hspace{.1in}  $1$ \\[.1in]
 4 & \hspace{.1in} $0$ &\hspace{.1in}  $0$    & \hspace{.1in}  $1$ &\hspace{.1in}  $0$
\end{tabular}
}
\end{table}

\bigskip

\begin{table}[htbp]
\centering 
\resizebox{.7\textwidth}{!}{
\begin{tabular}{l|cccc}
$p^2_{i,j}$ & \hspace{.1in}  1 &\hspace{.1in} 2 &\hspace{.1in} 3 &\hspace{.1in} 4 \\[.1in]
\hline\\[.03in] 
1 & \hspace{.1in}  $ r(r-1)$ &\hspace{.1in} $ (r-1)^3$   &\hspace{.1in} $r(r-1)$  &\hspace{.1in} $0$ \\[.1in]
 2 & \hspace{.1in} $(r-1)^3$  &\hspace{.1in} $r^4-4r^3+7r^2-8r$     &\hspace{.1in}  $ (r-1)^3$ &\hspace{.1in} $1$   \\[.1in]
 3 & \hspace{.1in} $r(r-1)$   &\hspace{.1in} $ (r-1)^3$      & \hspace{.1in}  $r(r-1)$  &\hspace{.1in} $0$ \\[.1in]
 4 & \hspace{.1in} $0$   &\hspace{.1in} $1$      & \hspace{.1in}  $0$  &\hspace{.1in} $0$ 
\end{tabular}
}
\end{table}

\bigskip

\begin{table}[htbp]
\centering 
\resizebox{.7\textwidth}{!}{
\begin{tabular}{l|cccc}
$p^3_{i,j}$ & \hspace{.1in}  1 &\hspace{.1in} 2 &\hspace{.1in} 3 &\hspace{.1in} 4 \\[.1in]
\hline\\[.03in] 
1 & \hspace{.1in}  $ r-2$ &\hspace{.1in} $ r^2(r-2)$   &\hspace{.1in} $r^2 $  &\hspace{.1in} $ 1$ \\[.1in]
 2 & \hspace{.1in} $r^2(r-2) $  &\hspace{.1in} $r^4-4r^3+5r^2-2r $     &\hspace{.1in}  $r^2(r-2) $ &\hspace{.1in} $0$   \\[.1in]
 3 & \hspace{.1in} $r^2 $   &\hspace{.1in} $r^2(r-2) $      & \hspace{.1in}  $r-2 $  &\hspace{.1in} $0$ \\[.1in]
 4 & \hspace{.1in} $1$   &\hspace{.1in} $0$      & \hspace{.1in}  $ 0$  &\hspace{.1in} $0 $ 
\end{tabular}

}
\end{table}

\bigskip

\begin{table}[htbp]
\centering 
\resizebox{.75\textwidth}{!}{
\begin{tabular}{l|cccc}
$p^4_{i,j}$ & \hspace{.1in}  1 &\hspace{.1in} 2 &\hspace{.1in} 3 &\hspace{.1in} 4 \\[.1in]
\hline\\[.03in] 
1 & \hspace{.1in}  $0$ &\hspace{.1in} $0$   &\hspace{.1in} $(r-1)(r^2+1)$  &\hspace{.1in} $0$ \\[.1in]
 2 & \hspace{.1in} $0$  &\hspace{.1in} $r(r-1)(r^2+1)$     &\hspace{.1in}  $0$ &\hspace{.1in} $0$   \\[.1in]
 3 & \hspace{.1in} $(r-1)(r^2+1)$   &\hspace{.1in} $0$      & \hspace{.1in}  $0$  &\hspace{.1in} $0$ \\[.1in]
 4 & \hspace{.1in} $0$   &\hspace{.1in} $0$      & \hspace{.1in}  $0$  &\hspace{.1in} $0$ 
\end{tabular}
}
\end{table}

\newpage
The first and second eigenmatrices of the scheme are:
\[
P=\begin{pmatrix}
1 & (r-1)(r^2+1)	 & r(r-2)(r^2+1) & (r-1)(r^2+1) & 1  \\[.05in]
1 & r^2+1	 & 0 & -(r^2+1) & -1  \\[.05in]
1 & r-1 	 & -2r & r-1 & 1 \\[.05in]
1 & -r+1	 & 0 & r-1 & -1  \\[.05in]
1 & -(r-1)^2	 & 2r(r-2) & -(r-1)^2 & 1
 \end{pmatrix}
 \]

\bigskip
\[
Q=\begin{pmatrix}
1 & \frac{r(r-1)^2}{2}	 & \frac{(r-2)(r+1)(r^2+1)}{2} & \frac{r(r-1)(r^2+1)}{2} & \frac{r(r^2+1)}{2}  \\[.05in]
1 & \frac{r(r-1)}{2}	 & \frac{(r-2)(r+1)}{2} & \frac{-r(r-1)}{2} & \frac{-r(r-1)}{2}  \\[.05in]
1 & 0	 & -(r+1) & 0 & r \\[.05in]
1 & \frac{-r(r-1)}{2}	 & \frac{(r-2)(r+1)}{2} & \frac{r(r-1)}{2} & \frac{-r(r-1)}{2}  \\[.05in]
1 & \frac{-r(r-1)^2}{2}	 & \frac{(r-2)(r+1)(r^2+1)}{2} & \frac{-r(r-1)(r^2+1)}{2} & \frac{r(r^2+1)}{2}  
 \end{pmatrix}.
\]

\section{Some properties of maximal $\{0,3\}$-cliques of $\cX$}\label{sec_2}
 
A  $\{0,3\}${\em-clique}  of the association scheme $\cX$ is a subset $Y$ of the vertex set $X$  such that $(x,y)\in R_0\cup R_3$, for all $x,y\in Y$;  a $\{0,3\}$-clique is said to be {\em maximal} if it is not contained in a larger $\{0,3\}$-clique.

Let $xyu$ be a (ordered) triple of elements in  $X$, and  $l,m,n \in \{0,\ldots,d\}$. The {\em triple intersection number} $\left[ \begin{smallmatrix} x & y & u \\l & m & n \end{smallmatrix} \right]$ (or $[\, l\; m\; n \,]$ for short) denotes the number of vertices $z\in X$ such that 
\[
(x,z)\in R_l, \ \ (y,z)\in R_m, \ \ (u,z)\in R_n.
\] 

Note that this symbol is invariant under permutations of its columns. Let $
(x,y)\in R_A$,  $(y,u)\in R_B$, $(u,x)\in R_C$, for some $A,B,C\in\{0,\ldots,d\}$.
The following identities hold:
\begin{equation}\label{eq_3}
\begin{array}{ccccccccc}
[\, 0\; m\; n \,] &+ &[\, 1\; m\; n \,]&+&\ldots &+&[\, d\; m\; n \,]& =&p^B_{mn}, \\[.02in]
[\, l\; 0\; n \,] &+ &[\, l\; 1\; n \,]&+&\ldots&+&[\, l\; d\; n \,]& =&p^C_{ln}, \\[.02in]
[\, l\; m\; 0 \,] &+& [\, l\; m\; 1 \,]&+&\ldots&+&[\, l\; m\; d \,]& =&p^A_{lm},
\end{array}
\end{equation}
for all $l,m,n\in\{0,\ldots,d\}$. Since
\begin{equation}\label{eq_2}
 [\, 0\; m\; n \,] = \delta_{m\,A}\delta_{n\, C},\ \ \  
 [\, l\; 0\; n \,] = \delta_{l\,A}\delta_{n\, B},\ \ \ 
 [\, l\; m\; 0 \,] = \delta_{l\,C}\delta_{m\, B}, 
\end{equation}
the identities (\ref{eq_3}) reduce to
\begin{equation}\label{eq_4}
\begin{aligned}
\sum_{r=1}^{d}{[\, r\; m\; n \,]}& =p^B_{mn}-\delta_{mA}\delta_{n C}, \\[.02in]
\sum_{r=1}^{d}{[\, l\; r\; n \,]}& =p^C_{ln}-\delta_{lA}\delta_{n B},\\[.02in]
\sum_{r=1}^{d}{[\, l\; m\; r \,]}& =p^A_{lm}-\delta_{lC}\delta_{m B},
\end{aligned}
\end{equation}
for all $l,m,n\in\{1,\ldots,d\}$. 
Identities  (\ref{eq_4}) can be interpreted as a system of  equations in the $(d)^3$ non-negative unknowns $[\, l\; m\; n\,]$, and we refer to it as {\em the system of equations associated to the triplet} $ABC$.  We point out the reader \cite{cj} for more details on triple intersection numbers in an association scheme.

Note that the system is uniquely determined by its  constant terms array, which will be denoted by $\left(p^B_{mn}-\delta_{mA}\delta_{n C};p^C_{ln}-\delta_{lA}\delta_{n B};p^A_{ln}-\delta_{lC}\delta_{m B}\right)_{l,m,n\in\{0,\ldots,d\}}$. Instead of the (ordered) triple $xyu$, consider the triple $yxu$. Then, the system of linear equations associated to $A'B'C'=ACB$ is defined by the following constant terms array
\begin{equation}\label{eq_5}
\left(p^C_{mn}-\delta_{mA}\delta_{n B};p^B_{ln}-\delta_{lA}\delta_{n C};p^A_{ln}-\delta_{lB}\delta_{m C}\right).
\end{equation}  
The unknown $[\, i\; j\; k \,]$ in (\ref{eq_4}),  that represents  $\left[ \begin{smallmatrix} x & y & u \\i & j & k \end{smallmatrix} \right]$, corresponds to the unknown $[\, j\; i\; k \,]'$ in the system defined by (\ref{eq_5}), representing  $\left[ \begin{smallmatrix} y & x & u \\j & i & k \end{smallmatrix} \right]$.  Note that if $B=C$, the two systems coincide, so that $[\, i\; j\; k \,]=[\, j\; i\; k \,]'=[\, j\; i\; k \,]$, for all $i,j\in\{0,\ldots,d\}$. This yields more useful conditions on unknowns in (\ref{eq_4}).
\begin{lemma}\label{lem_2}
For $x,y,u\in X$ with $(x,y),(y,u),(u,x)\in R_3$, $[\, i\; 3\; 3\,] =0$,  for  $i=1,2$.
\end{lemma}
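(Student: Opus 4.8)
The plan is to argue geometrically, reading $R_3$ as collinearity in $\cS$, rather than trying to extract the value from the linear system attached to the triplet $333$. Writing (\ref{eq_4}) for $A=B=C=3$ and using the full symmetry of the symbol forced by $A=B=C$, one checks that the only relevant consequences are $\tri{4}{3}{3}=0$ (its constant term vanishes) and $\tri{1}{3}{3}+\tri{2}{3}{3}+\tri{3}{3}{3}=r-3$; a genuine free parameter survives, so the system by itself does not isolate $\tri{1}{3}{3}$ and $\tri{2}{3}{3}$. Hence I would instead work directly inside the quadrangle.

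First I would record the two elementary incidence facts that drive everything, both immediate from axiom (iii). (a) A generalized quadrangle has no triangle: if $x,y,u$ are distinct and pairwise collinear, then the line joining $x$ and $y$ and the line joining $y$ and $u$ cannot be distinct, for otherwise $x$ (not on the latter line) would be collinear with the two distinct points $y,u$ of that line, violating the uniqueness in (iii); thus $x,y,u$ lie on a common line $L$. (b) The same uniqueness gives the companion statement: a point collinear with two distinct points of a line must itself lie on that line.

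Now let $x,y,u\in X$ with $(x,y),(y,u),(u,x)\in R_3$. These are three distinct, pairwise collinear points, so by (a) they lie on a common line $L$ of $\cS$. Suppose $z$ is counted by $\tri{i}{3}{3}$ for some $i\in\{1,2\}$, i.e.\ $(y,z),(u,z)\in R_3$ while $(x,z)\in R_i$. Since $R_1$ and $R_2$ are non-collinearity relations, $z$ is not collinear with $x$; in particular $z\neq x$, and also $z\neq y,u$ since $R_3$ excludes the identity. But $z$ is collinear with the two distinct points $y,u$ of $L$, so by (b) we get $z\in L$; as $z\neq x$, this forces $(x,z)\in R_3$, contradicting $(x,z)\in R_i$. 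Therefore no such $z$ exists and $\tri{i}{3}{3}=0$ for $i=1,2$.

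The only real work is the degenerate-case bookkeeping (checking that $x,y,u,z$ are genuinely distinct so that axiom (iii) may be invoked) and making the no-triangle step (a) rigorous from (iii); both are routine. As a consistency check, the argument also settles the remaining unknown: any $z$ collinear with all of $x,y,u$ lies on $L$, so $\tri{3}{3}{3}$ equals the number of points of $X$ on $L$ other than $x,y,u$, and this is exactly $r-3$, in agreement with the surviving equation of the $333$ system.
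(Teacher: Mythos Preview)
Your geometric argument is internally correct but circular in the context of this paper. The aim of Sections~\ref{sec_2} and~\ref{sec_3} is to start from an \emph{abstract} association scheme $\cX$ having the listed parameters and to \emph{reconstruct} the generalized quadrangle; at the point where Lemma~\ref{lem_2} is invoked, no quadrangle $\cS$ is available, and $R_3$ is nothing more than a relation in a scheme with $p^3_{33}=r-2$. Reading $R_3$ as ``collinearity in $\cS$'' and appealing to the GQ axiom (iii) therefore assumes exactly what the whole paper is set up to prove. Indeed, Lemma~\ref{lem_2} is the input to Lemma~\ref{lem_3}, which establishes that any $R_3$-edge lies in a unique maximal $\{0,3\}$-clique of size $r$; if the geometric interpretation were already in hand, Lemma~\ref{lem_3} would be trivial (the clique is the line minus its unique $\cS'$-point), and there would be nothing to reconstruct.

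Your diagnosis of the linear system is accurate: equations~(\ref{eq_4}) for $A=B=C=3$, even with the full $S_3$-symmetry, leave one free parameter and do not pin down $\tri{1}{3}{3}$ and $\tri{2}{3}{3}$. The paper closes this gap \emph{without} geometry by exploiting the $Q$-bipartite $Q$-polynomial structure: several Krein parameters $q^k_{ij}$ vanish, and by \cite[Theorem~3]{cj} each such vanishing yields an additional linear relation~(\ref{eq_6}) among the $[\,l\;m\;n\,]$. Adjoining these relations to~(\ref{eq_4}) and the symmetry identities produces an overdetermined system that a computer-algebra check shows has $\tri{1}{3}{1}=0$ forced, whence $\tri{1}{3}{3}=\tri{2}{3}{3}=0$. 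That Krein-based strengthening is the missing idea in your attempt.
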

\begin{proof}
 As $\cX$ is a $Q-$polynomial $Q-$bipartite scheme with $q^k_{ij}=0$, for $( i\, j\, k)= (1\,1 \, 1) , (1\,3 \, 1) , (1\,4 \, 1), (1\,2 \, 2), (1\,4 \, 2), (1\,3\, 3), (1\,4\, 4)$   and their permutations, by \cite[Theorem 3]{cj} we have
\begin{equation}\label{eq_6}
\sum_{l,m,n=1}^{4}{Q_{lr}Q_{ms}Q_{nt}[\, l\; m\; n\,]}=-Q_{0r}Q_{As}Q_{Ct}-Q_{Ar}Q_{0s}Q_{Bt}-Q_{Cr}Q_{Bs}Q_{0t},
\end{equation}
for $(r\, s\, t)= (1\,1 \, 1) , (1\,3 \, 1) , (1\,4 \, 1), (1\,2 \, 2), (1\,4 \, 2), (1\,3\, 3), (1\,4\, 4)$   and their permutations. The identities (\ref{eq_6}) can be interpreted as a system of equations in the $4^3=64$ non-negative unknowns $[\, l\; m\; n\,]$. 

For $A=B=C=3$,  we widen the system (\ref{eq_4}) with  the identities $[\,l\;m\;n\,]=[\,\sigma(l)\;\sigma(m)\;\sigma(n)\,]$, for any permutation $\sigma$ on symbols $l,m,n$, and  equations (\ref{eq_6}). 
Handing the above equations  to the computer algebra system Mathematica \cite{math}, we obtain their space of solutions,  depending just on $[\, 1\; 3\; 1\,]=0$. This implies  $[\, 1\; 3\; 3\,]=[\, 1\; 3\; 1\,]=0$ and $[\, 2\; 3\; 3\,]=-2[\, 1\; 3\; 1\,]=0$.
\comment{2. For $A=3$, $B=C=1$,  we widen the system (\ref{eq_4}) with  the identities $[\,l\;m\;n\,]=[\,m\;l\;n\,]$, and  Equations (\ref{eq_6}). 
As before,  thanks to  Mathematica \cite{math}, we obtain their space of solutions depending just on $[\, 1\; 3\; 1\,]=0$, for $r\ge3$. This implies also $[\, 1\; 1\; 1\,]=[\, 1\; 3\; 3\,]=[\, 1\; 3\; 1\,]=0$,\ $[\, 1\; 1\; 2\,]=[\, 1\; 2\; 3\,]=-2 [\, 1\; 3\; 1\,]=0$.} \end{proof}
\begin{lemma}\label{lem_3}
Let $x,y\in X$ with $(x, y)\in R_3$. Then, there exists a unique maximal $\{0,3\}$-clique (of size $r$) in $\cX$ containing $x$ and $y$, and this is the set $\{x,y\}\cup \cP^{(x,y)}_{3,3}$, with $\cP^{(x,y)}_{3,3}=\{z\in X:(x,z)\in R_3, (y,z)\in R_3\}$.
\end{lemma}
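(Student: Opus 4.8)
The plan is to show that the set $C=\{x,y\}\cup\cP^{(x,y)}_{3,3}$ is a $\{0,3\}$-clique, that it has size exactly $r$, and that it is the unique maximal one containing the pair $(x,y)$. Throughout I would lean on Lemma \ref{lem_2}, which is exactly the tool that forces mutual $R_3$-relatedness inside such a set.

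First I would verify that $C$ is a $\{0,3\}$-clique. Take two distinct elements $z,w\in\cP^{(x,y)}_{3,3}$, so that $(x,z),(y,z),(x,w),(y,w)\in R_3$. I must rule out $(z,w)\in R_1\cup R_2$. Suppose for contradiction $(z,w)\in R_i$ with $i\in\{1,2\}$. Applying the triple $(x,z,w)$: we have $(x,z)\in R_3$, $(x,w)\in R_3$, and $(z,w)\in R_i$. Here the count $\left[\begin{smallmatrix} x & z & w \\ * & 3 & 3\end{smallmatrix}\right]$ with the roles set up so that the two $3$-columns sit in the positions governed by Lemma \ref{lem_2} should give the contradiction: the point $y$ itself is $3$-related to both $z$ and $w$, so it is counted by a symbol of the shape $[\,i\;3\;3\,]$ relative to the base pair $(z,w)\in R_i$, and Lemma \ref{lem_2} (applied to a suitable triple of mutually $R_3$-related points, or by first establishing that $(z,w)$ must lie in $R_3$) yields $[\,i\;3\;3\,]=0$ for $i=1,2$, whence no such $z,w$ can be $R_1$- or $R_2$-related. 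The cleanest phrasing is: since $x,y,z$ are pairwise $R_3$-related and $x,y,w$ are pairwise $R_3$-related, consider the triple $z,x,w$ (or invoke that $\cP^{(x,y)}_{3,3}\cup\{x,y\}$ cannot contain a non-$R_3$ pair because that pair together with $x$ would violate Lemma \ref{lem_2}). This shows $C$ is a genuine $\{0,3\}$-clique.

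Next I would pin down the size. The number of $z$ with $(x,z),(y,z)\in R_3$ is by definition $p^3_{3,3}$, which from the $p^3_{i,j}$ table equals $r-2$; adding back $x$ and $y$ gives $|C|=r$. For maximality and uniqueness, suppose $C'$ is any $\{0,3\}$-clique containing both $x$ and $y$. Every other point $w\in C'$ satisfies $(x,w),(y,w)\in R_0\cup R_3$, and since $w\neq x,y$ this forces $(x,w),(y,w)\in R_3$, i.e. $w\in\cP^{(x,y)}_{3,3}$. Hence $C'\subseteq C$, so $C$ contains every $\{0,3\}$-clique through $(x,y)$; in particular $C$ is maximal and is the unique maximal such clique. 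The geometric sanity check is that, under the intended interpretation of $R_3$ as collinearity of points of $\cS\setminus\cS'$, the clique $C$ is exactly the set of $r$ points of a line of $\cS$ lying outside $\cS'$ (a line meeting $\cS'$ in one point, hence carrying $r$ of its $r+1$ points outside).

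**The main obstacle** I anticipate is the clique step rather than the counting: one must argue that an arbitrary non-adjacent (in the $\{0,3\}$ sense) pair inside $\{x,y\}\cup\cP^{(x,y)}_{3,3}$ is genuinely impossible, and the subtlety is that Lemma \ref{lem_2} is stated for triples that are \emph{already} pairwise $R_3$-related, so it does not directly apply to a hypothetical pair $(z,w)\in R_1\cup R_2$. The right move is to read the vanishing $[\,i\;3\;3\,]=0$ as the statement that, given a pair $(z,w)\in R_i$ with $i\in\{1,2\}$, there is no point $3$-related to both $z$ and $w$ that would complete a mutually-$R_3$ triple in the required configuration; equivalently, I expect to apply Lemma \ref{lem_2} with the base pair taken among the $R_3$-related points and derive that any common $R_3$-neighbour of two $R_3$-related points is itself $R_3$-related to the pair, closing the argument. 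Making this logical direction airtight — rather than merely invoking the table entry $p^3_{3,3}=r-2$ — is where the care is needed.
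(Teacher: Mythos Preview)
Your overall structure matches the paper's: count $|\cP^{(x,y)}_{3,3}|=p^3_{3,3}=r-2$, show the set is a $\{0,3\}$-clique via Lemma~\ref{lem_2}, and observe that any $\{0,3\}$-clique through $x,y$ is contained in it. The size and maximality/uniqueness parts are fine.

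The gap is precisely the one you flag in your obstacle paragraph, and you have not actually closed it. Lemma~\ref{lem_2} is a statement about a triple $x,y,u$ that is \emph{already} pairwise $R_3$-related; your attempted base triples $(x,z,w)$ or $(z,x,w)$ contain the edge $(z,w)\in R_i$ and so are ineligible. The fix, which is what the paper does, is to take the base triple to be $(x,y,z)$: here $z\in\cP^{(x,y)}_{3,3}$, so all three pairs lie in $R_3$, and Lemma~\ref{lem_2} applies. Now the \emph{fourth} point $w$ satisfies $(x,w)\in R_3$, $(y,w)\in R_3$, $(z,w)\in R_i$, so $w$ witnesses
\[
\left[\begin{smallmatrix} x & y & z \\ 3 & 3 & i \end{smallmatrix}\right]\ge 1.
\]
Since $A=B=C=3$ for the base triple, the triple intersection numbers are symmetric under permutation of columns, so $[\,3\;3\;i\,]=[\,i\;3\;3\,]=0$ by Lemma~\ref{lem_2}, giving the contradiction. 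Your reformulations (``no point $3$-related to both $z$ and $w$ completes a mutually-$R_3$ triple'', or ``any common $R_3$-neighbour of two $R_3$-related points is itself $R_3$-related to the pair'') do not capture this: the lemma does not constrain triples containing an $R_i$-edge, and the second reformulation is a tautology.

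One further omission: you only consider $i\in\{1,2\}$, but a priori $(z,w)\in R_4$ must also be excluded. The paper dispatches this with $p^4_{3,3}=0$: if $(z,w)\in R_4$ then no vertex (in particular not $x$) can be $3$-related to both.
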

\begin{proof}
 Clearly, $|\cP^{(x,y)}_{3,3}|=p^{3}_{3,3}= r-2$.  Assume there exists a pair of distinct elements  $z,z'\in \cP^{(x,y)}_{3,3}$, with $(z,z')\in R_i$, $i\in\{1,2,4\}$. Since $p^4_{33}=0$, we have $i\in\{1,2\}$. Then, $xyzz'$ is a $4-$tuple such that $x,y,z$ are pairwise $3-$related and 
\[
(z',x)\in R_3, \ \ (z',y)\in R_3, \ \ (z',z)\in R_i.
\] 
This yields that the number $[\, i\; 3\; 3\,]$ should be nonzero, but this contradicts  Lemma \ref{lem_2}. Then, $\{x,y\}\cup\cP^{(x,y)}_{3,3}$ is the unique maximal $\{0, 3\}$-clique of size $r$ containing $x$ and $y$.
\end{proof}

From now on,  ``clique'' will stand for  ``maximal $\{0, 3\}$-clique''.

For any $x \in X$, we denote by $x'$ the unique (as $p^0_{44}=1$) element which is $4-$related to $x$. We call $x'$ the \emph{antipode} of $x$ and $\{x,x'\}$ an \emph{antipodal pair}. 

Since $p^4_{13}=n_1=n_3$, $(x,y)\in R_1$ if and only if $(x,y')\in R_3$; as $p^4_{22}=n_2$, $(x,y)\in R_2$ if and only if $(x,y')\in R_2$. 

\begin{lemma}\label{lem_4}
For any clique $C$ in $\cX$, $C'=\{x' : x \in C\}$ is a clique of $\cX$.
\end{lemma}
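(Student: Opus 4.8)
The plan is to show first that $C'$ is a $\{0,3\}$-clique and then to promote it to a \emph{maximal} one by a size argument resting on Lemma \ref{lem_3}.

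To prove that $C'$ is a $\{0,3\}$-clique, I take two distinct elements $x,y\in C$; since $C$ is a $\{0,3\}$-clique we have $(x,y)\in R_3$, and the goal is to deduce $(x',y')\in R_3$. I would use only the two facts recorded immediately before the statement, namely that $(a,b)\in R_1$ if and only if $(a,b')\in R_3$, together with the fact that the antipodal map is an involution (that is, $x''=x$, which holds because $p^0_{44}=1$). Replacing $b$ by $b'$ in that rule and using $b''=b$ yields the companion equivalence $(a,b)\in R_3$ if and only if $(a,b')\in R_1$. Applying this to $(x,y)$ gives $(x,y')\in R_1$; by symmetry of the relations, $(y',x)\in R_1$; applying the $R_1\leftrightarrow R_3$ rule to the pair $(y',x)$ gives $(y',x')\in R_3$; and a final use of symmetry gives $(x',y')\in R_3$. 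Thus any two distinct antipodes are $3$-related, so $C'$ is a $\{0,3\}$-clique.

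For maximality, I note that the antipodal map $x\mapsto x'$ is a bijection of $X$ (being an involution), so $|C'|=|C|$. Since $C$ is a maximal $\{0,3\}$-clique it contains at least two $3$-related points (a singleton cannot be maximal, as $n_3>0$ forces every vertex to have an $R_3$-neighbour), and Lemma \ref{lem_3} then forces $|C|=r$, whence $|C'|=r$. Finally, any $\{0,3\}$-clique of size $r$ is maximal: choosing two of its points, the unique maximal $\{0,3\}$-clique through them supplied by Lemma \ref{lem_3} has size $r$ and contains $C'$, so it must coincide with $C'$. Hence $C'$ is a maximal $\{0,3\}$-clique, i.e.\ a clique of $\cX$.

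I do not anticipate a serious obstacle. The only delicate point is chaining the antipodal rule with the symmetry of the relations to carry the antipode across \emph{both} coordinates, since the stated rule alters only one coordinate at a time; this is exactly where the two symmetry steps above are needed. A cleaner conceptual alternative, which I would mention if space allows, is to observe that the adjacency matrix $A_4$ of $R_4$ is a permutation matrix lying in the commutative Bose--Mesner algebra with $A_4^2=A_0$ (again since $p^0_{44}=1$); commutativity then gives $A_4A_iA_4=A_i$ for every $i$, which is precisely the assertion that the antipodal map preserves each relation $R_i$, in particular $R_3$. Either route renders the clique step routine, and the maximality step is then immediate from Lemma \ref{lem_3}.
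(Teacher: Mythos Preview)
Your proof is correct and follows essentially the same route as the paper. The paper's argument is the one-line chain $(x,y)\in R_3 \Leftrightarrow (x,y')\in R_1 \Leftrightarrow (x',y')\in R_3$, which is exactly your first paragraph with the symmetry steps suppressed; maximality is left implicit there, since the chain shows that the involution $x\mapsto x'$ is an automorphism of the graph $(X,R_3)$ and hence carries maximal cliques to maximal cliques. Your explicit size argument via Lemma~\ref{lem_3} and the Bose--Mesner remark ($A_4A_iA_4=A_iA_4^2=A_i$) are both valid and simply make this explicit.
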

\begin{proof}
The result follows from the chain $(x,y)\in R_3 $ if and only if $(x,y')\in R_1$ if and only if $(x',y')\in R_3$.
\end{proof}
We will refer to the cliques $C$ and $C'$ in Lemma \ref{lem_4} as {\em antipodal cliques}.
Let $Y$ be a subset of  $X$.  We say that  $z\in X$ is $i-$related to $Y$, if  there is $x\in Y$ such that $(x,z)\in R_i$. 
\begin{lemma}\label{lem_7}
Let $C$ be a clique and $z\notin C$  be $3-$related to $C$.  Then, $z$ is $3-$related to precisely one point in $C$ and one point in $C'$. 
 In addition, the set of points $z\notin  C$  which are $3-$related to $C$ has size $r^3(r-1)$.
\end{lemma}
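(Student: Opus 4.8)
The plan is to count in two ways the pairs $(x,z)$ with $x\in C$ and $(x,z)\in R_3$, where $z$ ranges over points outside $C$, and to pin down the fine structure by invoking the triple intersection numbers from Lemma \ref{lem_2}. First I would fix the clique $C=\{x,y\}\cup\cP^{(x,y)}_{3,3}$ of size $r$ guaranteed by Lemma \ref{lem_3}, and take an arbitrary $z\notin C$ that is $3$-related to $C$, say $(x_0,z)\in R_3$ for some $x_0\in C$. The first claim is that $z$ is $3$-related to \emph{exactly one} point of $C$. Suppose instead that $z$ were $3$-related to two distinct points $x_0,x_1\in C$. Since $C$ is a $\{0,3\}$-clique we have $(x_0,x_1)\in R_3$, and together with $(x_0,z),(x_1,z)\in R_3$ this makes $\{x_0,x_1,z\}$ a $\{0,3\}$-clique; by the uniqueness part of Lemma \ref{lem_3}, the unique maximal $\{0,3\}$-clique containing the pair $(x_0,x_1)$ is $C$ itself, forcing $z\in C$, a contradiction. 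Hence $z$ meets $C$ in a single $3$-related point.

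For the companion statement about $C'$, I would use the antipodal dictionary recorded just before Lemma \ref{lem_4}: $(u,w)\in R_3$ if and only if $(u,w')\in R_1$. The point $z$ is $3$-related to a unique $x_0\in C$, equivalently $z$ is $1$-related to $x_0'\in C'$. What must be shown is that $z$ is $3$-related to a \emph{unique} element of $C'$, i.e. $1$-related to a unique element of $C$. I would establish this by a parallel argument: if $z$ were $3$-related to two points $w_1',w_2'$ of $C'$, then by Lemma \ref{lem_4} these sit inside the clique $C'$, and the same uniqueness-of-clique reasoning (now applied to $C'$, using $(w_1',w_2')\in R_3$ and $(w_i',z)\in R_3$) forces $z\in C'$. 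One then rules out $z\in C'$ by a parameter/antipode check, since $z$ being both $3$-related to $C$ and lying in $C'$ would conflict with the relation counts $p^4_{13},p^4_{33}$; the intersection numbers $p^3_{13}=r^2$ and the antipodal correspondence guarantee a single such point. The main subtlety here is the \emph{existence} of a $3$-related point in $C'$, which follows because $x_0\in C$ has its antipode $x_0'\in C'$ with $(x_0,x_0')\in R_4$, and one traces through the relation between $z$, $x_0$, and the antipodes to locate the required point.

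Finally, for the cardinality count I would count ordered pairs $(x,z)$ with $x\in C$, $z\notin C$, and $(x,z)\in R_3$. Each of the $r$ points $x\in C$ is $3$-related to exactly $n_3=(r-1)(r^2+1)$ points of $X$; of these, the ones lying \emph{inside} $C$ number $r-1$ (all other points of the clique). So each $x$ contributes $(r-1)(r^2+1)-(r-1)=(r-1)r^2$ points $z\notin C$ that are $3$-related to it, giving $r\cdot(r-1)r^2=r^3(r-1)$ such ordered pairs. Since, by the first part, every $z\notin C$ that is $3$-related to $C$ is $3$-related to exactly one point of $C$, these pairs are in bijection with the points $z$ themselves, and hence the set of such $z$ has size $r^3(r-1)$, as claimed. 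I expect the genuinely delicate step to be the uniqueness and existence on the $C'$ side: translating the $3$-relation to $C'$ into a $1$-relation to $C$ and ensuring, via the antipodal correspondence and the intersection numbers, that exactly one point of $C'$ is reached and that $z\notin C'$. The counting steps are routine once Lemma \ref{lem_2} and Lemma \ref{lem_3} are in hand.
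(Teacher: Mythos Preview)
Your plan matches the paper's proof: uniqueness on $C$ via Lemma~\ref{lem_3}, the passage to $C'$ via the antipodal dictionary, and the double count $r\cdot\bigl(n_3-(r-1)\bigr)=r^3(r-1)$ for the size. The one place where your sketch does not yet constitute an argument is the \emph{existence} of a point of $C'$ that is $3$-related to $z$ (equivalently, a point of $C$ that is $1$-related to $z$). Saying that ``$p^3_{13}=r^2$ and the antipodal correspondence guarantee a single such point'' or that one ``traces through the relation between $z$, $x_0$, and the antipodes'' does not actually locate the point; uniqueness you have, but existence needs a separate count.

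The paper fills this by a pigeonhole. Let $u\in C$ be the unique point with $(u,z)\in R_3$. There are $r^2+1$ cliques through $u$, hence $r^2$ of them other than the one determined by $\{u,z\}$, and $C$ is one of these $r^2$. Now $p^3_{31}=r^2$ says there are exactly $r^2$ points $w$ with $(u,w)\in R_3$ and $(z,w)\in R_1$; each such $w$ lies on one of those $r^2$ cliques (not the clique through $\{u,z\}$, since there $(z,w)\in R_3$), and by the ``at most one $1$-related point per clique'' that you already established, the $r^2$ points distribute one to each clique. In particular $C$ contains exactly one point $v$ with $(z,v)\in R_1$, i.e.\ $(z,v')\in R_3$ with $v'\in C'$. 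Your side remark that $z\notin C'$ is correct and easy to make precise: if $z=w'$ for some $w\in C$, then every point of $C$ is either $4$- or $1$-related to $z$, contradicting that $z$ is $3$-related to $C$.
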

\begin{proof}
The uniqueness of the point in $C$ that is $3-$related to $z$ follows from Lemma \ref{lem_3}.
Let $u\in C$ such that $(z,u)\in R_3$. Because of the previous results, $u$ lies on $r^2$ cliques, $C$ included, different from the one containing $\{u,z\}$. To prove that there exists exactly one point in $C'$ that is $3-$related to $z$ means to prove that there exists exactly one point in $C$ that is $1-$related to $z$. Suppose there exist two distinct points in $C$ that are $1-$related to $z$, then there would be two distinct points in $C'$ that are $3-$related to $z$, and this cannot happen. So there is at most one point in $C$ that is $1-$related to $z$. As $p^3_{31}=r^2$, there exists exactly one point $v\in C$  such that $(z,v)\in R_1$. 

Now we count the points $z\notin C$  which are $3-$related to $C$.
Any point in $C$ is $3-$related to $n_3-(r-1)=r^2(r-1)$ points not in $C$. As a $z\notin  C$ that is $3-$related to $C$  is $3-$related to exactly one point on $C$, the set of all these points $z$ has size $r( n_3-(r-1))=r^3(r-1)$.
\end{proof}
\begin{remark}\label{rem_1}
From the proof of the previous result we note that if $z\notin C$ is $3-$related to $C$, then there are precisely two points $x,y\in C$ such that $(x,z)\in R_3$ and $(y,z)\in R_1$. 
\end{remark} 
\begin{proposition}\label{prop_2}
Let $C$ be a clique in $\cX$ and $\Delta_C$ the set
\[
\Delta_C=\{z: (z,x)\in R_2, \mathrm{\ for\  all\ } x\in C \}.
\]
Then, $T_C=  \Delta_C\cup C \cup C'$ is a set of $r^3-r^2$ points. Furthermore, for every $z\in\Delta_C$, $|R_1(z) \cap\Delta_C|=|R_3(z) \cap\Delta_C|=r-1$ and $z' \in \Delta_C$. 
\end{proposition}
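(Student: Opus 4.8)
The plan is to partition the vertex set $X$ relative to the clique $C$ and count the pieces. For $z\in X$ write $a_i(z)$ for the number of points of $C$ that are $i$-related to $z$, so $a_1(z)+a_2(z)+a_3(z)+a_4(z)=r$ whenever $z\notin C$. The four pieces I will use are $C$ itself, the antipodal clique $C'$, the set $\Delta_C$ (the $z\notin C$ with $a_2(z)=r$), and the set $S_3$ of points $z\notin C$ that are $3$-related to $C$; by Lemma~\ref{lem_7} each such $z$ has $a_3(z)=1$ and $|S_3|=r^3(r-1)$. A short computation with the antipode rules shows that every $w=x'\in C'$ has profile $(a_1,a_2,a_3,a_4)=(r-1,0,0,1)$, so $C,C',\Delta_C,S_3$ are pairwise disjoint. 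Since $|X|=1+n_1+n_2+n_3+n_4=r^4-r^2$, once I know these four sets exhaust $X$ the values $|\Delta_C|=r^3-r^2-2r$ and $|T_C|=|\Delta_C|+|C|+|C'|=r^3-r^2$ follow by subtraction.

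The main obstacle, and the heart of the argument, is to rule out ``mixed'' points: I must show that every $z\notin C\cup C'$ with $a_3(z)=0$ already lies in $\Delta_C$. I plan to argue by contradiction. If such a $z$ had $a_1(z)\ge 1$, then since $(z,x)\in R_1\iff(z,x')\in R_3$ and $x\mapsto x'$ maps $C$ onto $C'$, the point $z$ would be $3$-related to the clique $C'$ (and $z\notin C'$ because $a_4(z)=0$). Lemma~\ref{lem_7} and Remark~\ref{rem_1} would then force $z$ to be $1$-related to exactly one point of $C'$; but, again by the antipode rule, that number equals $a_3(z)=0$, a contradiction. Hence $a_1(z)=0$, and together with $a_3(z)=a_4(z)=0$ this gives $a_2(z)=r$, i.e. $z\in\Delta_C$. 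This closes the partition and delivers the cardinality claims.

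For the regularity statement I fix $z\in\Delta_C$ and double count the set of pairs $(w,x)$ with $x\in C$, $(z,w)\in R_3$ and $(w,x)\in R_3$. Summing over $x\in C$ first: each $x$ satisfies $(z,x)\in R_2$, so the inner count is $p^2_{33}=r(r-1)$, giving $r\cdot r(r-1)=r^2(r-1)$. Summing over $w\in R_3(z)$ first: the inner count is $a_3(w)\in\{0,1\}$, equal to $1$ exactly when $w\in S_3$, so the total is $|R_3(z)\cap S_3|$. Because $z\in\Delta_C$ is neither $3$- nor $1$-related to any point of $C$, the partition gives $R_3(z)\cap(C\cup C')=\emptyset$, whence $|R_3(z)\cap\Delta_C|=n_3-|R_3(z)\cap S_3|=(r-1)(r^2+1)-r^2(r-1)=r-1$.

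Finally, $z'\in\Delta_C$ follows at once from $(z,x)\in R_2\iff(z',x)\in R_2$ (the antipode rule for $R_2$ combined with symmetry), which also shows $\Delta_C$ is stable under the antipodal involution. Since $w\mapsto w'$ is a bijection of $X$ with $(z,w)\in R_1\iff(z,w')\in R_3$ and $w\in\Delta_C\iff w'\in\Delta_C$, it restricts to a bijection between $R_1(z)\cap\Delta_C$ and $R_3(z)\cap\Delta_C$, so $|R_1(z)\cap\Delta_C|=r-1$ as well. I expect the partition-closure step of the second paragraph to be the only delicate point; the remaining steps are routine counting with the tabulated intersection numbers.
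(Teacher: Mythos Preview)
Your argument is correct and follows essentially the same route as the paper. The paper compresses your second paragraph into the single sentence ``By Lemma~\ref{lem_7}, $\Delta_C$ has size $|X|-r^3(r-1)-2r$'', leaving the exclusion of mixed profiles implicit; your explicit use of Remark~\ref{rem_1} applied to $C'$ is exactly what underlies that claim. Your double count for $|R_3(z)\cap\Delta_C|$ is the paper's partition argument rephrased, and for $|R_1(z)\cap\Delta_C|$ you use the antipodal bijection where the paper simply repeats the count with $p^2_{11}$ in place of $p^2_{33}$---a cosmetic difference only.
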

\begin{proof}
By Lemma \ref{lem_7}, $\Delta_C$ has size $|X|- r^3(r-1)-2 r$, thus  $T_ C$ consists of $r^3-r^2$ points.

Fix $z\in \Delta_C$. Since for any given $y\notin T_C$ there is exactly one point in $C$ which is $3-$related to $y$, $C$ provides a partition of the points not in $T_C$ which are $3-$related to $z$ in $r$ sets of size $p^2_{33}=r^2-r$. Therefore, the points of $\Delta_C$ which are $3-$related to $z$ are $n_3-rp^2_{33}=r-1$. Similar arguments show that the points of $\Delta_C$ which are $1-$related to $z$ are $n_1-rp^2_{11}=r-1$. As $(z,x)\in R_2$ if and only if $(z',x)\in R_2$, it follows $z'\in \Delta_C$.
\end{proof}

\section{Reconstructing the generalized quadrangle from the scheme}\label{sec_3}

The aim is to prove that the set $(T_C,R_3)$ is the graph $(r^2-r)K_r$.

\begin{lemma}\cite{is}\label{lem_10}
Let $x\in X$, and $\cQ(x)$ denote the set of the cliques through $x$.  For $y\in R_2(x)$, let $\lambda(y)$ be the set of cliques $D\in \cQ(x)$ such that $y$ is $3-$related to $D$, and $\mu(y)$ be the set of cliques  $D\in \cQ(x)$ such that $D\subset R_2(y)$. Then:
\begin{itemize} 
\item[-] $|\lambda(y)|=p^2_{33}=r(r-1)$,  $|\mu(y)|=r^2+1-p^2_{33}=r+1$;
\item[-] for  $u,v\in R_2(x)$,
\[
|\mu(u)\cap\mu(v)|=m-r^2+2r+1,\\[.02in]
\]
where  $m=m(x;u,v)=|\lambda(u)\cap\lambda(v)|$;
\item[-]
for $(u,v)\in R_3$, 
\[
n=n(x;u,v)=|R_3(x)\cap R_3(u)\cap R_3(v)|\le 1.
\]
\end{itemize}
\end{lemma}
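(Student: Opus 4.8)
The three assertions will be proved separately, but all of them rest on the two structural facts already established: every pair in $R_3$ lies in a unique clique (Lemma \ref{lem_3}), and a vertex $z\notin C$ that is $3$-related to a clique $C$ is $3$-related to exactly one point of $C$ and $1$-related to exactly one point of $C$ (Lemma \ref{lem_7} and its proof). I will also use that $x$ lies on exactly $r^2+1$ cliques: the cliques through $x$ meet only in $x$ and their complements partition $R_3(x)$ into blocks of size $r-1$, and $n_3=(r-1)(r^2+1)$, so $\cQ(x)$ has $r^2+1$ elements.

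For the first bullet, the plan is to count, for fixed $y\in R_2(x)$, the vertices $z$ with $(x,z),(y,z)\in R_3$: there are exactly $p^2_{33}=r(r-1)$ of them. Each such $z$ is $3$-related to $x$, hence determines a unique clique $D_z\in\cQ(x)$, and $D_z\in\lambda(y)$. The key point is that a clique $D\in\lambda(y)$ carries exactly one such $z$: if $z,z'\in D$ were both $3$-related to $y$, then $y,z,z'$ would be pairwise $3$-related with $y\notin D$, contradicting the ``exactly one point'' part of Lemma \ref{lem_7}. This identifies $\lambda(y)$ with the $r(r-1)$ counted vertices, so $|\lambda(y)|=p^2_{33}=r(r-1)$.

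The heart of the argument is to show $\cQ(x)=\lambda(y)\sqcup\mu(y)$, which forces $|\mu(y)|=(r^2+1)-r(r-1)=r+1$. Disjointness is immediate. For the covering I would take $D\in\cQ(x)\setminus\lambda(y)$ and show every point of $D$ is $2$-related to $y$. No $z\in D$ can be $4$-related to $y$: that would force $z=y'$, whence $(x,z)=(x,y')\in R_2$ (using $(x,y)\in R_2\Leftrightarrow(x,y')\in R_2$), contradicting $(x,z)\in R_3$. The delicate step is excluding $1$-relations, and here the antipodal correspondence $(z,y)\in R_1\Leftrightarrow(z,y')\in R_3$ is decisive: if some $z\in D$ were $1$-related to $y$, then $z$ would be $3$-related to the external point $y'$, so $D$ would be $3$-related to $y'$; Lemma \ref{lem_7} then produces $v\in D$ with $(v,y')\in R_1$, i.e. $(v,y)\in R_3$, forcing $D\in\lambda(y)$, a contradiction. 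Hence $D\in\mu(y)$. I expect this dichotomy to be the main obstacle, precisely because it must be carried out inside the scheme, using only the antipode map and Lemma \ref{lem_7}, with no appeal to the underlying geometry.

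The remaining bullets are then short. For the second, since $\cQ(x)=\lambda(u)\sqcup\mu(u)=\lambda(v)\sqcup\mu(v)$ with $|\cQ(x)|=r^2+1$, $|\lambda(\cdot)|=r(r-1)$ and $|\mu(\cdot)|=r+1$, I would refine the two partitions against each other: writing $m=|\lambda(u)\cap\lambda(v)|$ gives $|\lambda(u)\cap\mu(v)|=|\mu(u)\cap\lambda(v)|=r(r-1)-m$, and inclusion--exclusion yields $|\mu(u)\cap\mu(v)|=(r^2+1)-m-2(r(r-1)-m)=m-r^2+2r+1$. For the third, given $(u,v)\in R_3$ with $u,v\in R_2(x)$, any common $3$-neighbour $z$ of $x,u,v$ is $3$-related to both $u$ and $v$, so by Lemma \ref{lem_3} it lies in the unique clique $C$ through $u$ and $v$; since $(x,u)\in R_2$ we have $x\notin C$, and $z$ is a point of $C$ that is $3$-related to $x$, so Lemma \ref{lem_7} permits at most one such $z$, giving $n(x;u,v)\le 1$.
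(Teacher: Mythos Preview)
Your proof is correct and follows essentially the same line as the paper's: both derive $|\lambda(y)|=p^2_{33}$ from the fact (via Lemma~\ref{lem_7}) that the set $R_3(x)\cap R_3(y)$ contains no $3$-related pair, then obtain $|\mu(y)|$ by complement, apply inclusion--exclusion for the second item, and use uniqueness of the clique through $u,v$ together with Lemma~\ref{lem_7} for the third. Your argument is in fact more careful than the paper's in one respect: the paper asserts $|\mu(y)|=r^2+1-|\lambda(y)|$ without justifying the dichotomy $\cQ(x)=\lambda(y)\sqcup\mu(y)$, whereas you explicitly rule out $0$-, $1$- and $4$-relations on a clique $D\notin\lambda(y)$ using the antipodal correspondence $(z,y)\in R_1\Leftrightarrow(z,y')\in R_3$ and Lemma~\ref{lem_7} applied to $y'$---this is exactly the missing step, and your treatment of it is clean.
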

\begin{proof}
By Lemma \ref{lem_7}, $R_{3}(x)\cap R_{3}(y)$ contains no pairs of  points which are $3-$related. Therefore, $|\lambda(y)|=p^2_{33}=r(r-1)$ and $|\mu(y)|=r^2+1-p^2_{33}=r+1$.

Let $u,v\in R_2(x)$  and let $m=|\lambda(u)\cap\lambda(v)|$. By the parameters of $\cX$  there are $p^2_{33}-m=r^2-r-m$ cliques in $\cQ(x)$ which are $3-$related to exactly one point in $\{u,v\}$.  Therefore, 
\begin{equation}\label{eq_7}
|\mu(u)\cap\mu(v)|=r^2+1-2(r^2-r-m)+m=m-r^2+2r+1.
\end{equation}

Let  $y\in R_3(x)$ be a point that is $3-$related to both $u$ and $v$. If $(u,v)\in R_3$, then $y$ belongs to the unique clique defined by $u$ and $v$. Since in the graph induced by $R_3$ on $X$ the set $R_3(x)$ is a disjoint union of cliques of size $r$, such a point $y$ is  unique. In  other words, $n=|R_3(x)\cap R_3(u)\cap R_3(v)|\le 1$.
\end {proof}

\begin{center}
\framebox[1.1\width][c]{\textbf{Hypothesis 1.} For $u, v \in R_2(x)$ with $(u, v)\in R_3$, $m+n=r^2-2r$.}
\end{center}

\bigskip 
Henceforth, we assume Hypothesis 1 in all subsequent results.

\begin{lemma} \label{lem_9}
Let $\cQ$ be a set of size $r^{2}+1$ covered by a family $\{U_i| i=1,\ldots, r\}$ of $r$ subsets of $\cQ$, each of size $r+1$, and pairwise intersecting exactly in one element. Then, all the sets $U_i$ intersect in the same unique element.
\end{lemma}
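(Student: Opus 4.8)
The plan is to run a double-counting argument on the membership multiplicities of the elements of $\cQ$. For each $x\in\cQ$ let $k_x$ denote the number of sets $U_i$ containing $x$; since there are only $r$ sets we have $k_x\le r$, and since the $U_i$ cover $\cQ$ in fact $1\le k_x\le r$. Three elementary counts are then available. Counting incidences between elements and sets gives $\sum_{x\in\cQ}k_x=\sum_{i=1}^r|U_i|=r(r+1)$. The cardinality hypothesis gives $|\cQ|=r^2+1$. Finally, counting incidences between elements and \emph{pairs} of sets, and using that each pair of distinct $U_i,U_j$ meets in exactly one element, gives $\sum_{x\in\cQ}\binom{k_x}{2}=\sum_{i<j}|U_i\cap U_j|=\binom{r}{2}$.

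From the first two identities I obtain $\sum_{x}(k_x-1)=r(r+1)-(r^2+1)=r-1$, and from the third $\sum_{x}k_x(k_x-1)=2\binom{r}{2}=r(r-1)$. Dividing, the quantity $\sum_x k_x(k_x-1)$ is exactly $r$ times $\sum_x(k_x-1)$. Reading this as a weighted average, the value $k_x$, averaged against the nonnegative weights $(k_x-1)$, equals $r$; but every contributing $k_x$ satisfies $k_x\le r$. The equality case of this averaging forces all the weight to sit at the extreme value, i.e. $(k_x-1)=0$ unless $k_x=r$. In other words, no element lies in exactly $j$ sets for $2\le j\le r-1$, and by the covering hypothesis none lies in $0$ sets, so $k_x\in\{1,r\}$ for every $x$.

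Substituting this back into $\sum_{x}(k_x-1)=r-1$, only the elements with $k_x=r$ contribute, each contributing $r-1$; hence there is exactly one such element. That element lies in all $r$ sets $U_i$, and its uniqueness is immediate from the same count, which is precisely the assertion of the lemma. I expect the only point requiring care to be the justification of the equality case of the averaging, namely that a weighted mean of values bounded above by $r$ can equal $r$ only if every value carrying positive weight is equal to $r$, together with the bookkeeping (using the covering hypothesis) that excludes $k_x=0$ and hence keeps all weights nonnegative; the three counting identities themselves are entirely routine.
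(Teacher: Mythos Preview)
Your proof is correct. It is, however, a genuinely different argument from the paper's.

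The paper proceeds by contradiction on the maximum multiplicity $\overline d=\max_x k_x$: assuming $\overline d<r$, it picks an element realising $\overline d$, computes $|\bigcup_{i\le\overline d}U_i|=r\overline d+1$, observes that each remaining $U_j$ contributes at most $r+1-\overline d$ new elements to the cover, and arrives at $(r-\overline d)(1-\overline d)\ge0$, forcing $\overline d\le1$, a contradiction. Your argument instead runs the three global counts $\sum k_x$, $|\cQ|$, $\sum\binom{k_x}{2}$ and reads off the identity $\sum_x(k_x-1)(k_x-r)=0$; since each summand is $\le0$ under the bounds $1\le k_x\le r$, you get the dichotomy $k_x\in\{1,r\}$ directly, and then the count $\sum_x(k_x-1)=r-1$ pins down a single element with $k_x=r$.

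Your route is the cleaner of the two: it is a one-line variance trick that avoids choosing an extremal element and the attendant case analysis, and it makes the equality structure $k_x\in\{1,r\}$ visible immediately rather than only the conclusion $\overline d=r$. The paper's argument, on the other hand, is slightly more hands-on and geometric, which fits the surrounding style. One small remark: both arguments tacitly need $r\ge2$ (your weighted-average step divides by $\sum_x(k_x-1)=r-1$, and the paper needs $\overline d>1$), which is harmless since the ambient setting has $r>2$.
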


\begin{proof}
Let $I=\{1,...,r\}$ and $d(x)=|\{i \in I :x\in U_i\}|$. Let $\overline{d}= \underset{x \in \cQ} \max\ d(x)$. By the definition of $U_i$, we have $1<\overline{d} \leq r$. It suffices to show that $\overline{d} = r$ in order to achieve the statement. Suppose $\overline{d}<r$. Let $\overline x \in \cQ$ such that $d(\overline x)=\overline d$. Without loss of generality, we may assume $\overline x \in U_i$, for $i=1,...,\overline d$. We have $|\cup^{\overline d}_{i=1}{U_i}|=r \overline d+1$. Since $|U_k \cap U_l|=1$, for $k \neq l$, every subset $U_j$, with $j>\overline d$, shares precisely $\overline d$ elements with $\cup^{\overline d}_{i=1}{U_i}$. This yields 
\[
r^2+1 -( r \overline d+1) \le \sum^r_{j = \overline d +1}{|U_j \setminus \cup^{\overline d}_{i=1}{U_i} |}=  (r+1-\overline d)(r-\overline d). 
\]
It follows that $(r-\overline d)(1-\overline d) \geq 0$, i.e. $\overline d \leq 1$; but this is impossible as $\overline d>1$.
\end{proof}

\begin{proposition}\label{prop_5}
For any clique $C$, the set $\Delta_C$ is disjoint union of $r^2-r-2$ cliques and it contains no other pair in $R_3$.
\end{proposition}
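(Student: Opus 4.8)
The plan is to fix an arbitrary $z\in\Delta_C$ and to study the $r^2+1$ cliques through $z$, that is the set $\cQ(z)$ of Lemma~\ref{lem_10}. Every point of $C$ lies in $R_2(z)$, so for each $x\in C$ the sets $\lambda(x)$ and $\mu(x)$ partition $\cQ(z)$, with $|\mu(x)|=r+1$. My first step is to compute the pairwise intersections of the sets $\mu(x)$, $x\in C$. Any two distinct $x_i,x_j\in C$ are $3$-related (they lie in the clique $C$) and both lie in $R_2(z)$, so Hypothesis~1 applies with base point $z$. Their common $3$-neighbours all lie in the clique $C$ by Lemma~\ref{lem_3}, and $z$, being $2$-related to every point of $C$, is $3$-related to none of them; hence $n(z;x_i,x_j)=0$, Hypothesis~1 forces $m=r^2-2r$, and the formula of Lemma~\ref{lem_10} gives $|\mu(x_i)\cap\mu(x_j)|=1$ for every pair.

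This places us in the setting of Lemma~\ref{lem_9}, with $\cQ=\cQ(z)$ and the family $\{\mu(x):x\in C\}$ of $r$ subsets of size $r+1$ meeting pairwise in one element. The one remaining hypothesis of Lemma~\ref{lem_9}, and the crux of the argument, is that these subsets cover $\cQ(z)$. I would prove this by contradiction. A clique $D\in\cQ(z)$ missed by every $\mu(x)$ lies in $\bigcap_{x\in C}\lambda(x)$, i.e.\ every point of $C$ is $3$-related to $D$. Since $z\in D\cap\Delta_C$ is $2$-related to all of $C$ while every point of $D$ is $3$-related to $z$, the cliques $C$ and $D$ are disjoint. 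By Lemma~\ref{lem_7} each $x\in C$ is $3$-related to a unique point of $D$, necessarily distinct from $z$, giving a map from the $r$ points of $C$ into the $r-1$ points of $D\setminus\{z\}$; two points of $C$ then share a common $3$-neighbour $w\in D$, and as $w$ is $3$-related to two $3$-related points of $C$, Lemma~\ref{lem_3} places $w$ in $C$, contradicting $C\cap D=\emptyset$.

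With the covering established, Lemma~\ref{lem_9} yields a unique clique $D_0\in\bigcap_{x\in C}\mu(x)$. By the definition of $\mu$, $D_0\subseteq R_2(x)$ for all $x\in C$, hence $D_0\subseteq\Delta_C$. Its $r-1$ points other than $z$ are $3$-related to $z$ and lie in $\Delta_C$, so they sit inside $R_3(z)\cap\Delta_C$, which has size $r-1$ by Proposition~\ref{prop_2}; therefore $\{z\}\cup(R_3(z)\cap\Delta_C)=D_0$ is a clique of size $r$ contained in $\Delta_C$. As $z$ was arbitrary, and for an $R_3$-edge $(z,w)$ inside $\Delta_C$ the point $w\in D_0(z)$ forces $D_0(w)=D_0(z)$, the relation $R_3$ restricted to $\Delta_C$ partitions it into cliques of size $r$ with no $R_3$-pair crossing between them. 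Finally $|\Delta_C|=r^3-r^2-2r=r(r^2-r-2)$ yields exactly $r^2-r-2$ such cliques, which is the assertion.

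I expect the covering step to be the main obstacle. The pairwise-intersection computation follows mechanically from Hypothesis~1 and Lemma~\ref{lem_10}, but verifying that the sets $\mu(x)$ exhaust $\cQ(z)$ — equivalently, that no clique through $z$ is $3$-related to all of $C$ — is exactly where the defining property of $\Delta_C$ (that $z$ is $2$-related, not $3$-related, to every point of $C$) is genuinely used, and it is what makes Lemma~\ref{lem_9} applicable.
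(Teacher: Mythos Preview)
Your proposal is correct and follows essentially the same route as the paper: fix a point of $\Delta_C$, use Hypothesis~1 (with $n=0$) to get $|\mu(x_i)\cap\mu(x_j)|=1$, establish the covering of $\cQ(z)$ by the $\mu(x)$ via the pigeonhole argument on $|C|>|D\setminus\{z\}|$, invoke Lemma~\ref{lem_9}, and finish with the $R_3$-regularity from Proposition~\ref{prop_2}. The only cosmetic differences are the order of the two preliminary steps and that you cite Lemma~\ref{lem_3} rather than Lemma~\ref{lem_7} for the contradiction in the covering step; both work.
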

\begin{proof}
For $x \in \Delta_C$ we shall prove that in $\cQ(x)$ there is exactly one clique which lies in $\Delta_C$ and that  any other clique in $\cQ(x)$ intersects $\Delta_C$ exactly in $x$. 
If a clique $D\in \cQ(x)$ belongs to all the sets $\lambda(v),$ $v\in C$, then each point of
$C$ is $3-$related to some point in $D \setminus{\{x\}}$. As $|C|>|D \setminus \{x\}|$,  at least
one point of $D$ is $3-$related to two points of $C$. This is impossible because of Lemma \ref{lem_7}. So the sets $\mu(v)$, $v\in C$, cover $\cQ(x)$. On the other hand, for  distinct points $u,v\in C$, we have $n=|R_3(x)\cap R_3(u)\cap R_3(v)|=0$ as $x \in \Delta_C$.  By Hypothesis 1, $m=r^2-2r$ which yields $|\mu(u)\cap\mu(v)|=1$ from (\ref{eq_7}). So we have a set $\cQ(x)$ of size $r^{2}+1$ covered by a family
$\{\mu(v)|v\in C\}$ of $r$ subsets, each of size $r+1$, such that any two of them intersect in exactly one element of $\cQ(x)$. Thus, by Lemma \ref{lem_9}, all the sets $\mu(v)$, $v\in C$, have a clique $D$ in common, and so $D$ is entirely contained in $\Delta_C$. 
Finally,  the second part of the statement follows from the {regularity} of $\Delta_C$ with respect to $R_3$ (see Proposition \ref{prop_2}).
\end{proof}

A clique $D$ is said to be \emph{congruent} to a clique $C$ if either $D=C$ or $(D \times C)\cap R_3= \emptyset$. By Proposition  \ref{prop_5} the congruency is an equivalence relation. Let $\cT$ be the set of all equivalence classes. This clearly means that every $T \in \cT$ is defined by any clique $C$ contained in it, so $T=T_C$. 
The techniques used in the following results are mostly eigenvalue techniques, and  \cite{haem} is a general reference for these.
\begin{proposition}\label{prop_3}
Let $T \in \cT$ and  $x\notin T$. Then, $x$ is $3-$related to $r^2-r$ points of $T$.
\end{proposition}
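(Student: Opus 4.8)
The plan is to avoid eigenvalue machinery and instead combine a clean per-clique upper bound (coming from the uniqueness of cliques) with a single global double count; the two together pin the value down exactly. First I would record the precise clique structure of $T$. By Proposition~\ref{prop_5} the set $\Delta_C$ is a disjoint union of $r^2-r-2$ cliques containing no further pair in $R_3$, while $C$ and $C'$ are cliques by Lemma~\ref{lem_4}. Every point of $\Delta_C$ is $2$-related to all of $C$ (by the definition of $\Delta_C$), hence also to all of $C'$ since $(z,u)\in R_2$ if and only if $(z,u')\in R_2$; and each $u\in C$ is $R_4$-related to its antipode $u'\in C'$ and $R_1$-related to every other point of $C'$ (as $(u,v)\in R_3$ iff $(u,v')\in R_1$). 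Therefore no $R_3$-edge runs between the three parts, so $(T,R_3)$ is a disjoint union of $r^2-r$ cliques of size $r$, and every $z\in T$ is $R_3$-related to exactly $r-1$ points of $T$, namely its clique-mates.

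Next comes the upper bound. Fix $x\notin T$. If $x$ were $3$-related to two distinct points $y,z$ lying in one clique $D\subseteq T$, then $x,y,z$ would be pairwise $R_3$-related, and Lemma~\ref{lem_3} would force $x$ into the unique clique determined by $y$ and $z$, that is $x\in D\subseteq T$, contradicting $x\notin T$. Hence $x$ is $3$-related to at most one point of each of the $r^2-r$ cliques composing $T$, and so $|R_3(x)\cap T|\le r^2-r$.

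Finally I would double count the pairs $(x,z)$ with $x\notin T$, $z\in T$ and $(x,z)\in R_3$. Counting from the $T$-side, each $z\in T$ has exactly $n_3-(r-1)=(r-1)r^2$ such $R_3$-neighbours outside $T$, giving $|T|\,(r-1)r^2$ pairs in total. Using $|X|=r^4-r^2$ and $|T|=r^3-r^2$ (Proposition~\ref{prop_2}), the number of points $x\notin T$ equals $|X|-|T|=r^3(r-1)$, so the average of $|R_3(x)\cap T|$ over all $x\notin T$ is $|T|(r-1)r^2/(r^3(r-1))=r^2-r$. Since each of these $r^3(r-1)$ summands is at most $r^2-r$ while their average is exactly $r^2-r$, every summand must equal $r^2-r$, which is the assertion.

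The genuinely delicate step is the first one, namely verifying that $T$ is $R_3$-regular with internal valency exactly $r-1$, i.e. that there is no $R_3$-edge between $\Delta_C$, $C$, and $C'$. This rests on Proposition~\ref{prop_5} together with the antipodal identities relating $R_1$, $R_2$, $R_3$, $R_4$; once it is established, the upper bound and the counting identity are routine, and the ``average equals maximum'' principle closes the argument.
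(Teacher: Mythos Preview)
Your argument is correct and takes a genuinely different route from the paper. The paper proves Proposition~\ref{prop_3} via eigenvalue interlacing: it partitions $A_1$ (and then $A_2$) according to $\{T,\,X\setminus T\}$, computes the quotient matrix of average row sums, checks that its two eigenvalues tightly interlace those of $A_i$, and concludes the partition is equitable; the $R_3$-count is then obtained by subtracting the $R_1$- and $R_2$-counts from $|T|$. Your proof is purely combinatorial: you first pin down the internal $R_3$-structure of $T$ (using Proposition~\ref{prop_5} together with the antipodal identities to rule out $R_3$-edges between $C$, $C'$ and $\Delta_C$), extract the per-clique upper bound $|R_3(x)\cap D|\le 1$ from Lemma~\ref{lem_3}, and close with a double count and the ``average equals maximum'' principle. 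Your approach is more elementary in that it sidesteps the interlacing machinery altogether; the paper's approach has the advantage of delivering the $R_1$- and $R_2$-counts from $x$ into $T$ as direct byproducts, though these can equally be recovered from your $R_3$-count via the antipodal map once one notes that $T$ is closed under $x\mapsto x'$.
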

\begin{proof}
Order $X$ so that the matrix $A_1$ is partitioned as follows:
\[
A_1=\begin{pmatrix}
A_{1\,T} & M \\
M^t & A_{1\,X\setminus T}
\end{pmatrix},
\]
where $A_{1\,T}$ and $A_{1\,X\setminus T}$ are the adjacency matrices of $R_1$ restricted to $T$ and $X\setminus T$, respectively. Let $B_1$ be the matrix of average row sums for this partition. Then,
\[
B_1=\begin{pmatrix}
r-1 & r^3-r^2 \\
r^2-r& (r-1)(r^2-r+1)
\end{pmatrix},
\]
which has eigenvalues $(r-1)(r^2+1)$ and $-(r-1)^2$. Since these tightly interlace  the eigenvalues of  $A_1$,  the row sums are constant. It follows that for every $x\notin T$ there are $r^2-r$ points of $T$ that are $1-$related to $x$. 

Similarly write
\[
A_2=\begin{pmatrix}
A_{2\,T} & M \\
M^t & A_{2\,X\setminus T}
\end{pmatrix}.
\]
By using the regularity of $T$ (with respect to $R_1$ and $R_3$),  the matrix  $B_2$ of average row sums for this partition is
\[
B_2=\begin{pmatrix}
r(r^2-r-2)& r^2(r^2-3r+2) \\[.02in]
r(r^2-3r+2)& r(r^3-3r^2+4r-4)
\end{pmatrix},
\]
whose  eigenvalues are $r(r-2)(r^2+1)$ and $2r(r-2)$. Since these tightly interlace the eigenvalues of  $A_2$,  the row sums are constant. This implies that for every $x\notin T$ there are  $r(r-1)(r-2)$ points of $T$ which are $2-$related to $x$. 

In conclusion, the number of points of $T$ that are $3-$related to a fixed $x\in X\setminus T$ are $|T|- (r^2-r)- r(r-1)(r-2)=r^2-r$.
\end{proof}

\begin{lemma}\label{lem_8}
Let $T_1,T_2 \in \cT$, $T_1 \neq T_2$, such that $T_1 \cap T_2 \neq \emptyset$. Then, any clique in $T_1$ intersects exactly one clique in $T_2$, and  $|T_1 \cap T_2|=r^2-r$.
\end{lemma}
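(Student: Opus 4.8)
The plan is to show that the $r^2-r$ cliques of $T_1$ are matched one-to-one with those of $T_2$ by nonempty intersection, which yields both assertions at once. First I would record two facts about an arbitrary class $T\in\cT$: by Propositions \ref{prop_5} and \ref{prop_2}, $T$ is a disjoint union of exactly $(r^3-r^2)/r=r^2-r$ cliques, and the only $R_3$-pairs inside $T$ are those lying within a common clique. The second, crucial observation is that every $x\notin T$ is $3$-related to \emph{exactly one} point in each clique of $T$: by Proposition \ref{prop_3} such an $x$ has precisely $r^2-r$ neighbours in $T$ under $R_3$, while Lemma \ref{lem_7} forbids two $R_3$-neighbours of $x$ in a single clique; since $T$ has exactly $r^2-r$ cliques, the per-clique bound is attained with equality everywhere.

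Next I would set up the matching. If cliques $D\subseteq T_1$ and $E\subseteq T_2$ shared two distinct points $z,z'$, then $(z,z')\in R_3$ and, by Lemma \ref{lem_3}, both $D$ and $E$ equal the unique clique on $\{z,z'\}$, forcing $D=E$ and hence $T_1=T_2$, contrary to hypothesis. Thus cliques from distinct classes meet in at most one point; in particular each clique of $T_1$ meets $T_2$ in at most one point, so $|T_1\cap T_2|\le r^2-r$ and the assignment of a $T_1$-clique to the (unique) $T_2$-clique it meets is a partial matching.

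The heart of the argument is to rule out an \emph{unmatched} clique, that is, a clique $D\subseteq T_1$ with $D\cap T_2=\emptyset$. For such a $D$ every point lies outside $T_2$, so applying the crucial observation to $T_2$ together with Lemma \ref{lem_7} shows that, for each clique $E\subseteq T_2$, the relation $R_3$ induces an injection $D\to E$ sending $x$ to its unique $R_3$-neighbour in $E$; as $|D|=|E|=r$ by Lemma \ref{lem_3}, this is a bijection. Now pick $p\in T_1\cap T_2$ (nonempty by hypothesis) and let $D^\ast\subseteq T_1$, $E^\ast\subseteq T_2$ be the cliques containing it; note $D^\ast$ is matched, so $D\neq D^\ast$. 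The bijection $D\to E^\ast$ must hit $p$, yielding $w\in D$ with $(w,p)\in R_3$. Since $w,p\in T_1$, this forces $w$ and $p$ into a common clique of $T_1$, which can only be $D^\ast$; but then $w\in D\cap D^\ast=\emptyset$, a contradiction. Hence no clique of $T_1$ is unmatched: each meets $T_2$ in exactly one point, lying in a unique clique of $T_2$, and since these intersection points are distinct (each point of $T_1\cap T_2$ lies in just one $T_1$-clique), counting the $r^2-r$ cliques of $T_1$ gives $|T_1\cap T_2|=r^2-r$.

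I expect the main obstacle to be recognizing that no counting argument will work: every double count of edges between $T_1$ and $T_2$ (for instance, tallying $R_3$-edges through Proposition \ref{prop_3}) collapses to an identity in $|T_1\cap T_2|$ because the classes are so regular, so the intersection size cannot be extracted by enumeration alone. The real leverage is the rigidity supplied by Lemmas \ref{lem_3} and \ref{lem_7} — that distinct cliques share at most one point and that an outside vertex meets each clique exactly once — which forces the perfect-matching picture and lets a single vertex of $T_1\cap T_2$ propagate a contradiction to every hypothetical unmatched clique.
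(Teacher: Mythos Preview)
Your proof is correct and mirrors the paper's argument: both rule out a clique $D\subseteq T_1$ disjoint from $T_2$ by showing that the $R_3$-map from $D$ to the clique $E^\ast\subseteq T_2$ containing a chosen $p\in T_1\cap T_2$ is a bijection, forcing $p$ to have an $R_3$-neighbour in $D$ inside $T_1$, a contradiction. The one minor difference is that you obtain the ``one $R_3$-neighbour per clique of $T_2$'' fact via Proposition~\ref{prop_3} and pigeonhole, whereas the paper reads it off directly from the definition of $T_{C_2}$ (a point outside $T_{C_2}=\Delta_{C_2}\cup C_2\cup C_2'$ is necessarily $3$-related to $C_2$, hence to a unique point of $C_2$ by Lemma~\ref{lem_7}), so the eigenvalue interlacing of Proposition~\ref{prop_3} is not actually needed here.
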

\begin{proof}
Let $z\in C_1 \cap C_2 $ with $C_i$ a clique in $T_i$,  $i=1,2$. Suppose $D$ to be a clique in $T_1$, $D \cap T_2= \emptyset$. By the construction of $T \in \cT$, for each $x \in D$ there is a unique $y\in C_2$ such that $(x, y)\in R_3$. As $|D|=|C_2|$, $z$ would be $3-$related to $D$, but this is a contradiction. So $D \cap T_2 \neq \emptyset$, for each clique $D$ in $T_1$. If $|D \cap T_2| > 1$, then $D$ is contained in $T_2$  by Proposition \ref{prop_5}, i.e. $T_1=T_2$ that is impossible. Hence, the result follows.
\end{proof}

\begin{proposition}\label{prop_4}
Let $T \in \cT$ and $x\notin T$.  Then, there are $r+1$ cliques through $x$ disjoint from $T$. For any such a clique $D$, $T_D\cap T=\emptyset$.
\end{proposition}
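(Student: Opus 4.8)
The plan is to count the cliques through $x$ according to how they meet $T$, exploiting that the cliques through a fixed point partition its $R_3$-neighbourhood, together with the clique-decomposition of $T$ coming from Proposition \ref{prop_5}.

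First I would record the enumeration that underlies everything. Since $R_3(x)$ has $n_3=(r-1)(r^2+1)$ points and, by Lemma \ref{lem_3}, every pair $\{x,y\}$ with $y\in R_3(x)$ lies in a unique clique (necessarily a clique through $x$, of size $r$), the cliques through $x$ partition $R_3(x)$ into blocks of size $r-1$. Hence there are exactly $(r-1)(r^2+1)/(r-1)=r^2+1$ cliques through $x$. Because $x\notin T$, a clique $D\ni x$ meets $T$ precisely when $D$ contains a point of $R_3(x)\cap T$, and by Proposition \ref{prop_3} we have $|R_3(x)\cap T|=r^2-r$.

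The one step with real content is to show that each clique through $x$ meets $T$ in at most one point. I would argue by contradiction: if a clique $D\ni x$ contained two distinct points $y_1,y_2\in T$, then $(y_1,y_2)\in R_3$. Now $T=T_C$ for some clique $C$, and by Proposition \ref{prop_5} together with the decomposition $T_C=\Delta_C\cup C\cup C'$ the set $T$ is a disjoint union of cliques with no further pairs in $R_3$; hence $y_1,y_2$ lie in a single clique of $T$, and by the uniqueness in Lemma \ref{lem_3} that clique equals $D$, forcing $D\subseteq T$ and contradicting $x\in D\setminus T$. Granted this, the $r^2-r$ points of $R_3(x)\cap T$ sit in pairwise distinct cliques through $x$, so exactly $r^2-r$ cliques through $x$ meet $T$, and the remaining $(r^2+1)-(r^2-r)=r+1$ are disjoint from $T$; this gives the first assertion. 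For the second assertion, I would take $D$ to be one of these $r+1$ cliques, so $D\cap T=\emptyset$ and $D\subseteq T_D$ with $T_D,T\in\cT$. Since $D\not\subseteq T$ we have $T_D\neq T$, and if $T_D\cap T\neq\emptyset$ then Lemma \ref{lem_8} (applied with $T_1=T_D$, $T_2=T$) would force every clique of $T_D$, in particular $D$, to meet $T$, contradicting $D\cap T=\emptyset$; thus $T_D\cap T=\emptyset$.

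I expect the counting and the appeal to Lemma \ref{lem_8} to be immediate once the clique-decomposition of $T$ is in hand, so the only place needing care is confirming that $T_C$ has no pairs in $R_3$ beyond its constituent cliques. This reduces to checking that $\Delta_C$ is $R_2$-related to all of $C\cup C'$ and that there is no $R_3$-pair between $C$ and $C'$, both of which follow from the definition of $\Delta_C$ and the antipodal equivalences $(a,b)\in R_1\Leftrightarrow (a,b')\in R_3$ and $(a,b)\in R_2\Leftrightarrow (a,b')\in R_2$; combined with Proposition \ref{prop_5} this yields that $(T,R_3)$ is the graph $(r^2-r)K_r$, which is exactly what the "at most one point" step needs.
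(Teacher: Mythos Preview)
Your argument is correct. The first assertion is handled just as in the paper: you count the $r^2+1$ cliques through $x$, use Proposition~\ref{prop_3} to get $|R_3(x)\cap T|=r^2-r$, and invoke the clique decomposition of $T$ (Proposition~\ref{prop_5} together with the easy checks about $C$, $C'$ and $\Delta_C$) to see that no clique on $x$ can hit $T$ twice. Your explicit verification that $(T,R_3)\cong (r^2-r)K_r$ is exactly what the paper is using when it says the $r^2-r$ points ``lie on different cliques in $T$''.

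For the second assertion you take a genuinely different route. The paper argues directly: given $D$ disjoint from $T$, each of the $r$ points of $D$ is $3$-related to $r^2-r$ points of $T$ (Proposition~\ref{prop_3}), these $r$ sets of size $r^2-r$ are pairwise disjoint by Lemma~\ref{lem_3}, and since $|T|=r(r^2-r)$ every point of $T$ is $3$-related to $D$; hence $\Delta_D\cap T=\emptyset$, and together with $D'\cap T=\emptyset$ this gives $T_D\cap T=\emptyset$. You instead appeal to Lemma~\ref{lem_8}: if $T_D\cap T\neq\emptyset$ with $T_D\neq T$, that lemma forces every clique of $T_D$, in particular $D$, to meet $T$. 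This is shorter and avoids the counting, at the cost of relying on Lemma~\ref{lem_8}; the paper's direct argument, on the other hand, yields the extra information that each point of $T$ is $3$-related to exactly one point of $D$, which is not needed here but is conceptually pleasant. Both approaches are sound and there is no circularity, since Lemma~\ref{lem_8} precedes Proposition~\ref{prop_4} and does not depend on it.
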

\begin{proof}
By  Propositions \ref{prop_5} and   \ref{prop_3}, $x$ is $3-$related to $r^2-r$ points of $T$ lying on different cliques in $T$. As $x$ is on $r^2+1$ cliques of $X$ by Lemma \ref{lem_3}, then there are exactly $r^2+1-(r^2-r)=r+1$ cliques through $x$ that are disjoint from $T$. 
Let $D$ be such a clique. Then $D'\cap T=\emptyset$, where $D'$ is the antipodal clique of $D$. Each point of $D$ is $3-$related to $r^2-r$ points of $T$. Note that, for $y,z\in D$, $y\neq z$, the two corresponding subsets of $3-$related points in $T$ are disjoint by Lemma \ref{lem_3}. So, each point of $T$ is $3-$related to exactly one point of $D$. This yields $T_D=\Delta_D\cup D\cup D'$ is disjoint from $T$.
\end{proof}

Let $T_1, T_2 \in \cT$ with $T_1 \cap T_2=\emptyset$. For any  $x\notin T_1\cup T_2$ we define $\theta_i(x)$ to be the number of cliques $C$ around $x$ such that $C\cap (T_1\cup T_2)=i$, for $i=0,1,2$. Clearly, $\theta_0(x)+\theta_1(x)+\theta_2(x)=r^2+1$. By Proposition \ref{prop_3}, also $\theta_1(x)+2\theta_2(x)=2(r^2-r)$ holds. 

Proposition \ref{prop_5} yields that the set $\cC$ of all cliques with one vertex in $T_1$ and one in $T_2$ has size  $r(r^2-r)^2$. By double counting the pairs $(x,C)$ with $x\notin T_1\cup T_2$, $C\in \cC$ with $x\in C$, we get
\begin{equation}\label{eq_8}
\sum_{x\notin T_1\cup T_2}{\theta_2(x)}=r(r-2)(r^2-r)^2.
\end{equation}

By plugging the above equations into Eq. (\ref{eq_8}), we find 
\[
\sum_{x\notin T_1\cup T_2}{\theta_0(x)}=(r^2-r)^2=|X\setminus (T_1\cup T_2)|.
\]

\begin{center}
\framebox[1.1\width][c]{ 
\textbf{Hypothesis 2.} For all $x\notin T_1\cup T_2$, $\theta_0(x)\ge 1$.}
\end{center}

From now on, we assume Hypothesis 2, together with Hypothesis 1,  in all subsequent results.

By Hypothesis 2, for $T_1, T_2 \in \cT$ with $T_1 \cap T_2=\emptyset$, there exists a unique clique that is disjoint from $T_1 \cup T_2$ through any $x \notin T_1\cup T_2$. This means that every such a pair $T_1, T_2$ determines a unique partition of $X$ in elements of $\cT$. This can be viewed in the following way. 

For a fixed $x\notin T_1\cup T_2$, let $C$ be the unique clique on $x$ disjoint from $T_1 \cup T_2$, and $T_3=T_C$. Let $y \notin T_1 \cup T_2 \cup T_3$, and $D$ be the unique clique through $y$ disjoint from $T_1 \cup T_2$. If there was $z \in D \cap T_3$, then there would be two distinct cliques on $z$ disjoint from $T_1 \cup T_2$ since $T_3$ is partitioned in cliques; but this gives a contradiction.

 Let $\Pi$ be the set of such partitions. Note that every partition has size $r+1$.

\begin{theorem} \label{main}
Consider the following incidence structure $\cS$:

\begin{tabular}{lll}
Points: & (i) & elements of $X$ \\[0.1in]
        & (ii)& elements of $\cT$\\[0.1in]
Lines: & (a) & $C \cup \{T_C\}$, where $C$ is a clique in $X$\\[0.1in]
       & (b) & elements of $\Pi$\\[0.1in]
Incidences:& (i),(a) & a point $x$ of type (i) is incident with a line $C\cup \{T_C\}$ \\  && of type (a) if and only if $x\in C$ \\[0.1in]
             &(i),(b)& none \\[0.1in]
             & (ii),(a) &  a point $T$ of type (ii) is incident with a line $C\cup \{T_C\}$ \\ && of type (a) if and only if $T=T_C$\\[0.1in]
						 & (ii),(b) &  a point $T$ of type (ii) is incident with a line $\pi$ of type (b) \\ && if and only if $T\in \pi$.
\end{tabular}

Then, $\cS$ is a GQ of order $(r,r^2)$. Furthermore, the points of type (ii) together with the lines of type (b) give rise to a doubly subtended subGQ $\cS'$ of order $(r,r)$ of $\cS$.
\end{theorem}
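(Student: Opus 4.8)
The plan is to verify directly that $\cS$ satisfies the three axioms of a generalized quadrangle with $s=r$, $t=r^2$, and then to produce the subquadrangle together with an involution witnessing double subtension. Two preliminary observations drive everything. First, since $X$ is the disjoint union of $T_C$ and the set of points $3$-related to $C$ (the cardinalities from Lemma \ref{lem_7} and Proposition \ref{prop_2} add up to $|X|=r^2(r^2-1)$), one has $x\notin T_C$ if and only if $x$ is $3$-related to $C$, and then by Lemma \ref{lem_7} to exactly one point of $C$. Second, inside any $T=T_C$ the only pairs in $R_3$ lie within a single one of its $r^2-r$ cliques (Proposition \ref{prop_5}, together with the fact that $\Delta_C$ is $R_2$-related to $C$ and to $C'$, while $C,C'$ are related by $R_1,R_4$). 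I would also use throughout that each $x\in X$ lies on $r^2+1$ cliques (Lemma \ref{lem_3}) and that $|\cT|=(r+1)(r^2+1)$, which follows since $X$ contains $r(r^4-1)$ cliques, each $T$ being a disjoint union of $r^2-r$ of them.

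Next I would pin down the valencies. Every type (a) line $C\cup\{T_C\}$ carries the $r$ points of $C$ plus one point of type (ii), and every type (b) line is a partition into $r+1$ classes, so each line has $r+1=s+1$ points. A type (i) point $x$ lies only on the $r^2+1$ cliques through it. A type (ii) point $T$ lies on the $r(r-1)$ type (a) lines coming from the cliques of $T$, plus the type (b) lines through it; to count the latter I would first show that exactly $r^2+r$ elements of $\cT$ are disjoint from $T$. Indeed, each of the $r^3(r-1)$ points $x\notin T$ lies, by Proposition \ref{prop_4}, in $r+1$ cliques disjoint from $T$, hence in $r+1$ distinct elements $T'$ disjoint from $T$ (distinct because $T'$ is a disjoint union of cliques, so $x$ lies in a unique clique of it); double counting against the $r^2(r-1)$ points of each such $T'$ yields $r(r+1)$ of them. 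Since each disjoint pair lies in a unique partition and each partition through $T$ contains exactly $r$ of these $T'$, there are $r+1$ type (b) lines through $T$, for a total of $r^2+1=t+1$. The point and line totals $(r+1)(r^3+1)$ and $(r^2+1)(r^3+1)$ then follow.

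I would then check that $\cS$ is a partial linear space: two type (i) points share a (unique) line exactly when they are in $R_3$ (Lemma \ref{lem_3}); a type (i) point $x$ and a type (ii) point $T$ share a line iff $x\in T$, namely the clique of $T$ through $x$; and two type (ii) points share a (type (b)) line iff they are disjoint. The core is axiom (iii), verified in four cases. For a type (i) point off a type (a) line $C\cup\{T_C\}$: if $x\in T_C$ the unique flag passes through $T_C$, and if $x\notin T_C$ the first preliminary observation gives the unique point of $C$ that is $3$-related to $x$. For a type (i) point off a type (b) line, the partition property selects the unique class containing $x$. For a type (ii) point $T$ off a type (a) line $C\cup\{T_C\}$, I would split on $T\cap T_C$: if empty, the flag is $(T_C,\pi(T,T_C))$; if nonempty, Lemma \ref{lem_8} forces $|C\cap T|=1$ and the flag runs through that point. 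Finally, for $T$ off a type (b) line $\pi=\{T_1,\dots,T_{r+1}\}$, the relations $|T\cap T_i|\in\{0,r^2-r\}$ (Lemma \ref{lem_8}) and $\sum_i|T\cap T_i|=|T|=r^2(r-1)$ give that exactly one class of $\pi$ is disjoint from $T$, yielding the unique flag. This proves $\cS$ is a GQ of order $(r,r^2)$.

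For the last assertion, the points of type (ii) and lines of type (b) already satisfy all the verified conditions (each line on $r+1$ points, each point on $r+1$ lines, at most one line through two points, and axiom (iii) — exactly the fourth case above), so $\cS'$ is a subGQ of order $(r,r)$. To obtain double subtension I would show that the antipodal map $\phi$ fixing each $T\in\cT$ and sending $x\mapsto x'$ is an automorphism: it carries $C\cup\{T_C\}$ to $C'\cup\{T_{C'}\}=C'\cup\{T_C\}$ (Lemma \ref{lem_4} and $C'\subseteq T_C$) and fixes every type (b) line, preserving incidence. Since $\phi$ is a nontrivial involution fixing $\cS'$ pointwise and interchanging antipodes, \cite[Lemma 2.3]{brown} gives that $\cS'$ is doubly subtended; concretely each $x$ subtends the ovoid $\{T:x\in T\}$, with $x\in T\Leftrightarrow x'\in T$ showing $x,x'$ subtend the same ovoid and $p^0_{44}=1$ forbidding a third. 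The main obstacle I expect is axiom (iii), specifically the two mixed cases pitting a type (ii) point against type (a) and type (b) lines: these are where the intended geometric meaning of $\cT$ and $\Pi$ must be turned into exact intersection counts, and they rely on combining Lemma \ref{lem_8} with the partition structure rather than on any single earlier result; the closely related regularity count, that exactly $r^2+r$ elements of $\cT$ are disjoint from a given one, is the other delicate point, as it is what forces the correct number of type (b) lines and hence the order $(r,r^2)$.
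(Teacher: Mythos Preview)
Your proposal is correct and follows essentially the same approach as the paper: verify the GQ axioms case by case, then invoke the antipodal involution together with \cite[Lemma 2.3]{brown} for double subtension. Your treatment is in fact somewhat more explicit in two places: you spell out the partial linear space check, and you give a direct double-counting argument (via the $r^2+r$ classes disjoint from $T$) for the $r+1$ type (b) lines through a type (ii) point, whereas the paper simply cites Proposition \ref{prop_4} for this; but the underlying ideas and the four-case verification of axiom (iii) coincide.
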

\begin{proof}
By Lemma \ref{lem_3}, every point of $X$ lies on $r^2+1$ cliques. Proposition \ref{prop_5} says that any element $T \in \cT$ contains $r^2-r$ cliques, and $T$ is contained in $r+1$ elements of $\Pi$ by Proposition \ref{prop_4}. Hence, every point of $\cS$ is on $r^2+1$ lines. 

It is easy to check that each line contains $r+1$ points.

Let $x \in X$ and consider $C \cup\{ T_C\}$ such that $x \notin C$.  If $x\in T_C$, there exists a unique clique through $x$, say $C_x$, in $T_C$ by Proposition \ref{prop_5}. Thus, 
\[
x \ {\rm I}\ ( C_x \cup \{T_C\})\ {\rm I} \ T_C \ {\rm I} \ ( C \cup \{T_C\}).
\]

If $x\notin T_C$, there exists a unique point $y \in C$ such that $(x, y)\in R_3$. Let $D$ be the clique containing $\{x, y\}$. Then,
\[
x \ {\rm I}\ ( D \cup \{T_D\})\ {\rm I} \ y \ {\rm I} \ ( C \cup \{T_C\}).
\]
Let $x\in X$ and $\pi \in \Pi$. Then, there exists a unique $T \in \pi$ such that $x \in T$. By Proposition \ref{prop_5}, there exists a unique clique $C$ in $T$ containing $x$. Therefore,
\[
x \ {\rm I}\ ( C \cup \{T\})\ {\rm I} \ T \ {\rm I} \ \pi.
\]
Let $T \in \cT$ and  $C \cup \{T_C\}$ with $T_C \neq T$. 

Assume $T \cap T_C \neq \emptyset$. By Lemma \ref{lem_8} there is a unique clique $D$ in $T$ such that $D \cap C=\{z\}$, so $T=T_D$; from which
\[
T \ {\rm I}\ ( D \cup \{T\})\ {\rm I} \ z \ {\rm I} \ ( C \cup \{T_C\}).
\]
Now assume $T \cap T_C = \emptyset$ so that $T, T_C$ define a unique partition $\pi \in \Pi$ of $X$, giving
\[
T\ {\rm I}\ \pi \ {\rm I} \ T_C \ {\rm I} \ ( C \cup \{T_C\}).
\]
Let $T\in \cT$ and $\pi \in \Pi$ with $T \notin \pi$. Since $\pi$ is a partition of $X$,  by Lemma \ref{lem_8} there are precisely $r$ elements in $\pi$ intersecting $T$ in $r^2-r$ points. Thus, there is a unique $T'\in \pi$ disjoint from $T$. Let $\pi'$ denote the partition defined by $T$ and $T'$. Then, the following chain of incidences holds:
\[
T\ {\rm I}\ \pi' \ {\rm I} \ T' \ {\rm I} \ \pi.
\]
Therefore, $\cS$ is a GQ of order $(r,r^2)$, and the points of type (ii) together with the lines of type (b) give rise to a subGQ $\cS'$ of order $(r,r)$ of $\cS$.

Consider the involutorial automorphism $\phi:x\in X \mapsto x' \in X$ of $\cX$. By using Proposition \ref{prop_2}, it is easy to check that $\phi$ induces an involutorial automorphism of $\cS$ which fixes $\cS'$ pointwise. According to \cite[Lemma 2.3]{brown}, $\cS'$ is a doubly subtended subGQ of $\cS$.
\end{proof}
\appendix
\section{Comments on Hypothesis 1 and 2}\label{appendix}

\subsection{Hypothesis 1} 

In order to reconstruct the points of the subtended  subGQ as well as the lines of the GQ through these points, we need to prove that the set $(T_C,R_3)$ is the graph $(r^2-r)K_r$, for each clique $C$. This is equivalent to showing that the matrix $A_3|_{T_C}$ has eigenvalue $r-1$ with multiplicity at least $r^2-r$.

\newpage 
The adjacency matrices $A_i$ have eigenvalues and multiplicities as collected  in the following table
\begin{table}[htbp]
\centering 
\resizebox{.9\textwidth}{!}{
\begin{tabular}{l|ccccc}
 & \hspace{.1in}  $A_1$ &\hspace{.1in} $A_2$ &\hspace{.1in} $A_3$ &\hspace{.1in} $A_4$&\hspace{.1in} multiplicity\\[.1in]
\hline \\[.03in] 
 $V_0$ & \hspace{.1in}$(r-1)(r^2+1)$	 & \hspace{.1in}$r(r-2)(r^2+1)$ & \hspace{.1in}$(r-1)(r^2+1)$ & \hspace{.1in}1 &\hspace{.1in}1 \\[.05in]
$V_1$ & \hspace{.1in}$r^2+1$	 & \hspace{.1in}0 & \hspace{.1in}$-(r^2+1)$ & \hspace{.1in}$-1$  &\hspace{.1in}$\frac{r(r-1)^2}{2}$\\[.05in]
$V_2$ & \hspace{.1in}$r-1$ 	 & \hspace{.1in}$-2r$ & \hspace{.1in}$r-1$ & \hspace{.1in}1&\hspace{.1in} $\frac{(r-2)(r+1)(r^2+1)}{2}$\\[.05in]
$V_3$ & \hspace{.1in}$-r+1$	 & \hspace{.1in}0 & \hspace{.1in}$r-1$ & \hspace{.1in}$-1$ &\hspace{.1in}$\frac{r(r-1)(r^2+1)}{2}$ \\[.05in]
$V_4$ & \hspace{.1in}$-(r-1)^2$	 & \hspace{.1in}$2r(r-2)$ & \hspace{.1in}$-(r-1)^2$ & \hspace{.1in}1 &\hspace{.1in}$\frac{r(r^2+1)}{2}$
\end{tabular}
}
\end{table}

By following Hobart and Payne \cite{HobPay}, the hope is to find a matrix $E$ (involving the matrix $A_3$ and the all-ones matrix $J$, and possibly some other adjacency matrix) with only two eigenvalues so that Theorem 1.3.3 in \cite{hae} can be applied. 
From the table above we see that we do not get only two eigenvalues if we use a linear combination of $A_3$ and $J$.  Anyway, also by taking one further adjacency matrix  the problem persists. The best we can do is to find a matrix $E$ with three distinct eigenvalues, but Theorem 1.3.2 from \cite{hae} does not give useful information. 

The arguments in Section \ref{sec_3} underlying Hypothesis 1 are inspired  by \cite{is}. It follows from the parameters of $\cX$ that $X$ can be considered as a set of vectors $\{x^*=|X|^{1/2}\tilde Ex:x \in X\}$ in the eigenspace $V_1+V_4$ of dimension $n_3+1$; here, $\tilde E=E_1+E_4$ with  $E_i$  the projector over the $i$-th eigenspace, and $x\in X$ is identified with its characteristic vector in $\bR^{|X|}$. If $(x,y)\in R_i$ then $\<x^*,y^*\>=q_1(i)+q_4(i)$. For a fixed $x\in X$, we consider $\{y^*:y \in R_3(x)\}\cup\{x^*\}$.  The hope is that these vectors  form a basis for $V_1+V_4$, and using this fact to evaluate $m+n$ via the inner products $\<v^*,w^*\>$, for $v,w\in R_2(x)$. This product mainly depends on the number of certain configurations whose evaluation involves 
triple intersection numbers which seems to be not constant according to the computations  implemented with Mathematica. In our opinion, this approach appears to be the most fruitful, but more ideas are needed.

\subsection{Hypothesis 2}

The last step toward the characterization of the Penttila-Williford scheme in terms of its parameters is the definition of the lines of the subtended subGQ.  Recall that the points of the subGQ are the sets $T\in\cT$, and through each of them $r+1$ lines of the subGQ must be defined. A hint for the construction of these lines is given by Proposition \ref{prop_4}, a result similar to Lemma 2.7 in \cite{is} and to a Proposition in \cite{HobPay}: for $T\in\cT$ and $x\notin T$ there exists precisely one clique through $x$ which is disjoint from $T$. \\ In this context, it is natural to consider partitions of $X$ into disjoint unions of elements of $\cT$ as the missing lines. In order that two disjoint elements in $\cT$ define a unique line in the subGQ, we need that through a point not in the union of the sets there is precisely one clique that is disjoint from them. From here Hypothesis 2 arose.

\end{document}